\documentclass[12pt,reqno]{article}

\usepackage[usenames]{color}
\usepackage{amssymb}
\usepackage{graphicx}
\usepackage{amscd}

\usepackage[colorlinks=true,
linkcolor=webgreen,
filecolor=webbrown,
citecolor=webgreen]{hyperref}

\definecolor{webgreen}{rgb}{0,.5,0}
\definecolor{webbrown}{rgb}{.6,0,0}

\usepackage{color}
\usepackage{fullpage}
\usepackage{float}

\usepackage{psfig}
\usepackage{graphics,amsmath,amssymb}
\usepackage{amsthm}
\usepackage{amsfonts}
\usepackage{latexsym}
\usepackage{epsf}

\setlength{\textwidth}{6.5in}
\setlength{\oddsidemargin}{.1in}
\setlength{\evensidemargin}{.1in}
\setlength{\topmargin}{-.1in}
\setlength{\textheight}{8.4in}

\newcommand{\Mod}[1]{\ (\mathrm{mod}\ #1)}

\begin{document}

\begin{center}
\epsfxsize=4in
%\leavevmode\epsffile{logo129.eps}
\end{center}

\theoremstyle{plain}
\newtheorem{theorem}{Theorem}
\newtheorem{corollary}{Corollary}
\newtheorem{lemma}{Lemma}
\newtheorem{proposition}{Proposition}

\theoremstyle{definition}
\newtheorem{definition}{Definition}
\newtheorem{example}{Example}

\theoremstyle{remark}
\newtheorem{remark}{Remark}

\begin{center}
\vskip 1cm{\LARGE\bf 
Eigenvalues of Matrices whose Elements are Ramanujan Sums or Kloosterman Sums
}
\vskip 1cm 
\large
Noboru Ushiroya \\
National Institute of Technology, Wakayama College \\
Japan \\
ushiroya@wakayama-nct.ac.jp\\
%\href{mailto:ushiroya@wakayama-nct.ac.jp}{\tt ushiroya@wakayama-nct.ac.jp}\\
\end{center}

\vskip .2 in

\begin{abstract} Let $c_q(n)$ be the Ramanujan sums and let $S(m,n;q)$ be the Kloosterman sums. We study the eigenvalues of $q \times q$ matrices whose $(m,n)$ entry is $c_q (m-n)$ or $S(m,n;q)$ where $q$ is a fixed positive integer.
We also study the eigenvalues of matrices whose entries are sums of Ramanujan sums or sums of Kloosterman sums.
\end{abstract}

\section{Introduction}

For $q,n \in \mathbb{N}=\{ 1,2, \cdots \}$, Ramanujan \cite{rama} defined the Ramanujan sums $c_q (n)$ by
\begin{equation*}
c_q(n)=\sum_{\substack{k=1 \\ \gcd(k,q)=1}}^q \exp \Bigl(\frac{2 \pi i kn}{q} \Bigr) , 
\end{equation*}
where $\gcd(k,q)$ is the greatest common divisor of $k$ and $q$.
For $m,n,q \in \mathbb{N}$, Kloosterman \cite{kl} defined the Kloosterman sums $S(m,n;q)$ by 
\begin{equation*}
S(m,n;q)=\sum_{\substack{k=1 \\ \gcd(k,q)=1}}^q \exp \Bigl(\frac{2 \pi i }{q} \Bigl(mk+nk^* \Bigr) \Bigr), 
\end{equation*}
where $k^*$ is the inverse of $k$ modulo $q$.
If $m = 0$ or $n = 0$, then the Kloosterman sum reduces to the Ramanujan sum. The Ramanujan sums and the Kloosterman sums have many interesting properties. However, in this paper we focus only on the eigenvalues of matrices whose elements are Ramanujan sums or Kloosterman sums.
A reason why we consider these eigenvalues is as follows.
The well known large sieve inequality (see, e.g.,  \cite{coj})
\begin{equation}
\sum_{q \leq Q} \sum_{\gcd(k,q)=1} \Bigl| \sum_{n \leq x} a_n \exp \bigl(\frac{2 \pi i kn}{q} \bigr) \Bigr|^2 \leq (Q^2+ 4 \pi x) \sum_{n \leq x} |a_n|^2 \label{eq:0-1}
\end{equation}
can be rewritten as
\begin{equation*}
 \sum_{m,n \leq x} a_m \overline{a_n} \Bigl(\sum_{q \leq Q}c_q(m-n) \Bigr) \leq  (Q^2+ 4 \pi x) \sum_{n \leq x} |a_n|^2,
\end{equation*}
where $\overline{a_n}$ is the complex conjugate of $a_n$, since
\begin{align*}
& \sum_{q \leq Q} \sum_{\gcd(k,q)=1} \Bigl|\sum_{n \leq x} a_n \exp (\frac{2 \pi i kn}{q}) \Bigr|^2 \\
=& \sum_{q \leq Q} \sum_{\gcd(k,q)=1} \sum_{m,n \leq x} a_m \overline{a_n} \exp \bigl(\frac{2 \pi i k(m-n)}{q} \bigr) \\ 
=& \sum_{m,n \leq x}  a_m \overline{a_n} \sum_{q \leq Q}  \sum_{\gcd(k,q)=1} \exp \bigl(\frac{2 \pi i k(m-n)}{q} \bigr) \\  
=& \sum_{m,n \leq x} a_m \overline{a_n} \Bigl(\sum_{q \leq Q} c_q(m-n) \Bigr).         
\end{align*}

Let $X=(X_{mn})_{m,n=1}^x$ be the matrix whose $(m,n)$ entry is $X_{mn}=\sum_{q \leq Q} c_q (m-n)$ and let $a=(a_1, a_2, \ldots , a_x)$. From (\ref{eq:0-1}) we see that the Rayleigh quotient $R(X,a):= (\sum_{m,n \leq x} a_m X_{mn} \overline{a_n})/(\sum_{n \leq x} a_n \overline{a_n})$ is not less than $Q^2+ 4 \pi x$ for any nonzero vector $a$. If we can prove that $X$ is nonnegative definite, then we have $R(X,a) \in [\lambda_m(Q,x), \lambda_M(Q,x)]$, where  $\lambda_m(Q,x),\lambda_M(Q,x) $ are the minimal and maximal positive eigenvalue of the matrix $X$ respectively, and $a$ is a vector perpendicular to the eigenspace corresponding to the eigenvalue $0$.
We cannot explicitly obtain $\lambda_M(Q,x)$ or $\lambda_m(Q,x)$ under general conditions, however, we prove in Theorem \ref{th2} that $\lambda_M(Q,x)=\lambda_m(Q,x)=x$ holds under a very restricted condition in which $x$ is the least common multiple of $1,2, \cdots, Q$. Moreover, since
\begin{align*}
\sum_{q \leq Q} \sum_{\gcd(k,q)=1} \Bigl| \sum_{n \leq x} a_n \exp \bigl(\frac{2 \pi i kn}{q} \bigr) \Bigr|^2 & \leq \sum_{q \leq Q} \sum_{\gcd(k,q)=1} (\sum_{n \leq x} |a_n|)^2 \\
& = \sum_{q \leq Q} \varphi (q) (\sum_{n \leq x} |a_n|)^2 \\
& \ll Q^2 (\sum_{n \leq x} |a_n|)^2
\end{align*}
holds for any sequence $\{a_n \}_{n =1}^x$, we have 
\begin{equation}
(\sum_{n \leq x} |a_n|)^2 \geq \frac{\lambda_m (Q,x)}{Q^2} \sum_{n \leq x} |a_n|^2 \label{eq:0-2}
\end{equation}
for any vector $(a_1, a_2, \cdots , a_x)$ perpendicular to the eigenspace corresponding to the eigenvalue $0$. If we take $a_n= \begin{cases} 1, &  \text{if $n \in S$ }; \\ 0, & \text{otherwise}, \end{cases}$ for some set $S$ and if we can show that $(a_1, a_2, \cdots , a_x)$ is perpendicular to the eigenspace corresponding to the eigenvalue $0$, then we have from (\ref{eq:0-2})
\begin{equation*}
\# \{S \cap [1,x] \} \geq \frac{\lambda_m (Q,x)}{Q^2},
\end{equation*}
from which 
\begin{equation*}
\lim_{x \to \infty} \# \{S \cap [1,x] \} = \infty
\end{equation*}
follows in the case $\lambda_m (Q,x)=x$ and $Q=x^{1/2-\varepsilon}$ for some $\varepsilon>0$.
Regretfully, we can obtain neither $\lambda_M(Q,x)$ nor $\lambda_m(Q,x)$ under general conditions, however, we expect that the above idea will lead to an estimate from below.

We also investigate the eigenvalues of $B_q=(S(m,n;q))_{m,n=1}^q$ and $Y=(Y_{mn})_{m.n=1}^x=(\sum_{q=1}^Q S (m,n;q))_{m,n=1}^x$, where $x$ is the least common multiple of $1,2, \cdots, Q$. Many results concerning the Kloosterman sums are obtained by many mathematicians hitherto. For example, it is well known that Weil's bound \cite{we}
\begin{equation*}
|S(a,b;q)| \leq \tau(q) \gcd (a,b,q) \sqrt{q}
\end{equation*}
holds where $\tau (q)=\sum_{d | q} 1$. Kuznetsov's bound \cite{ku}
\begin{equation*}
\sum_{q \leq Q} \frac{S(a,b;q)}{q} \ll Q^{\frac{1}{6} + \varepsilon} 
\end{equation*}
is also well known. However, as for the eigenvalues of matrices whose elements are the Kloosterman sums, to my knowledge, few results are known. We obtain in Theorem \ref{th3} and \ref{th4} the eigenvalues of matrices whose elements are Kloosterman sums. Of course there is no relation between the eigenvalues and Weil's bound or Kuznetsov's bound, however, we expect that our study contributes somewhat to the theory of Kloosterman sums.

\section{Properties of Ramanujan sums and  Kloosterman sums}

In this section we show some properties of the Ramanujan sums $c_q(n)$ and the Kloosterman sums $S(m,n;q)$.
Let $\delta (m,n)= \begin{cases} 1, &  \text{if $m=n$ }; \\ 0, & \text{if $m \neq n$}, \end{cases}$ and let
 $\varphi(q)=\# \{k \leq q ; \gcd(k,q)=1 \}$ be the Euler totient function. 
The following properties of the Ramanujan sums are well known (see, e.g., \cite{co,Mc,ramM,sc}).
\begin{align}
& c_q(n)=c_q(-n),  \label{eq-ram-01} \\
& c_q(0)=\varphi(q), \label{eq-ram-02} \\
& c_q(n)=c_q(n') \quad \text{if} \quad n \equiv n'  \Mod{q}, \label{eq-ram-03} \\
& \sum_{k=1}^q c_q (k)=0.  \label{eq-ram-1}
\end{align}

The following properties of the Kloosterman sums are also well known (see, e.g., \cite{iw}).
\begin{align*}
& S(m,n;q)=S(n,m;q), \\
& S(m,n;q)=S(m',n';q) \quad \text{if} \quad m \equiv m' \Mod{q} \quad \text{and} \quad n \equiv n' \Mod{q}.
\end{align*}

We write $d|n$ if $d$ divides $n$. We first prove the following lemma.
\begin{lemma}
\label{lem1}
If $x \in \mathbb{N}, \ q \mid x$ and $r \mid x$, then we have 
\begin{align}
& \sum_{a=1}^x c_q(m-a) c_r(a-n) =\begin{cases} x c_q(m-n), & \text{if $q=r$}; \\ 0, & \text{otherwise}, \end{cases} \label{eq:lem1-1}  \\
& \sum_{a=1}^x S(m,a;q) S(a,n;r) =\begin{cases} x c_q(m-n), & \text{if $q=r$}; \\ 0, & \text{otherwise}. \end{cases} \label{eq:lem1-2} 
\end{align}
\end{lemma}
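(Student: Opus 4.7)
The plan is to prove both identities by the same mechanism: expand each sum as an exponential sum, interchange summations to isolate a geometric sum in $a$, and exploit the hypothesis $q,r\mid x$ to turn that inner sum into an orthogonality-type Kronecker delta.

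For \eqref{eq:lem1-1}, I would write
\[
\sum_{a=1}^x c_q(m-a)c_r(a-n)
=\sum_{\substack{1\le k\le q\\ \gcd(k,q)=1}}\sum_{\substack{1\le l\le r\\ \gcd(l,r)=1}} e^{2\pi i (km/q - ln/r)} \sum_{a=1}^x e^{2\pi i a(l/r - k/q)}.
\]
Set $\theta = l/r - k/q$. Because $q\mid x$ and $r\mid x$, we have $x\theta\in\mathbb{Z}$, so $e^{2\pi i x\theta}=1$, and the standard geometric-series argument gives $\sum_{a=1}^{x}e^{2\pi i a\theta}=x$ when $\theta\in\mathbb{Z}$ and $0$ otherwise. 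Hence only pairs $(k,l)$ with $qr\mid lq-kr$ contribute. This divisibility forces $r\mid lq$; since $\gcd(l,r)=1$, we obtain $r\mid q$, and symmetrically $q\mid r$, so $q=r$. When $q=r$, $\theta\in\mathbb{Z}$ reduces to $k\equiv l\pmod q$, i.e.\ $k=l$, and the surviving contribution is $x\sum_{\gcd(k,q)=1}e^{2\pi i k(m-n)/q}=x c_q(m-n)$.

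For \eqref{eq:lem1-2}, the same template applies. Expanding both Kloosterman sums and collecting the $a$-dependence gives
\[
\sum_{a=1}^x S(m,a;q)S(a,n;r)
=\sum_{\substack{\gcd(k,q)=1\\ \gcd(l,r)=1}} e^{2\pi i(mk/q + nl^*/r)} \sum_{a=1}^x e^{2\pi i a(k^*/q + l/r)}.
\]
Again the inner sum is $x$ or $0$ according as $k^*/q + l/r$ is an integer, and the divisibility condition $qr\mid k^*r + lq$ forces $q=r$ by the same argument (using $\gcd(k^*,q)=1$). With $q=r$ fixed, the condition becomes $l\equiv -k^*\pmod q$, which determines $l$ uniquely and yields $l^*\equiv -k\pmod q$. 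Substituting, the exponential in the outer sum becomes $e^{2\pi i k(m-n)/q}$, and summing over $k$ gives $x c_q(m-n)$.

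The step I expect to require the most care is the divisibility argument that rules out $q\ne r$; everything else is bookkeeping. The clean use of $\gcd(l,r)=1$ (respectively $\gcd(k^*,q)=1$) to pass from $r\mid lq$ to $r\mid q$ is the engine of both identities, and the Kloosterman case additionally relies on the involutive nature of $k\mapsto k^*$ to identify the surviving exponential with $c_q(m-n)$ rather than some twisted variant.
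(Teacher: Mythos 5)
Your proof is correct and follows essentially the same route as the paper: expand both factors as exponential sums, swap the order of summation, and evaluate the inner geometric sum over $a$ (which the paper isolates as its Lemma \ref{lem2}), with the coprimality conditions $\gcd(k,q)=\gcd(\ell,r)=1$ forcing $q=r$ and the congruence $\ell^*\equiv -k\Mod{q}$ collapsing the Kloosterman case to $c_q(m-n)$ exactly as in the paper. Your spelled-out divisibility chain $qr\mid \ell q-kr\Rightarrow r\mid q$ and $q\mid r$ is just a more explicit version of the paper's justification of Lemma \ref{lem2}, so there is nothing to add.
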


In order to prove Lemma \ref{lem1}, we prepare the following lemma.
\begin{lemma}
\label{lem2}
If $x \in \mathbb{N}, \ q \mid x$, $r \mid x$ and $\gcd(k,q)=\gcd(\ell,r)=1$, then we have
\begin{align*}
\sum_{a=1}^x \exp \Bigl(2 \pi i \Bigl(\frac{k}{q}+\frac{\ell}{r} \Bigr)a \Bigr)=\begin{cases} x, & \text{if $q=r$ and $k+\ell \equiv 0 \Mod q$}; \\ 0, & \text{otherwise}. \end{cases} 
\end{align*}
\end{lemma}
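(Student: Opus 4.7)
The plan is to treat the sum as a finite geometric progression. Set $\alpha = \frac{k}{q} + \frac{\ell}{r} = \frac{kr + \ell q}{qr}$, so the sum in question is $\sum_{a=1}^x z^a$ with $z = e^{2\pi i \alpha}$. Because $q \mid x$ and $r \mid x$, both $\frac{kx}{q}$ and $\frac{\ell x}{r}$ are integers, hence $\alpha x \in \mathbb{Z}$ and $z^x = 1$. Therefore the sum equals $x$ if $z = 1$ and $0$ otherwise (using $z\frac{z^x - 1}{z-1}$ when $z \neq 1$). So everything reduces to determining exactly when $z = 1$, i.e.\ when $\alpha \in \mathbb{Z}$.

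Next I would prove that $\alpha \in \mathbb{Z}$ is equivalent to $q = r$ together with $k + \ell \equiv 0 \Mod{q}$. The easy direction is immediate: if $q = r$ and $q \mid k + \ell$, then $\alpha = \frac{k+\ell}{q} \in \mathbb{Z}$. For the forward direction, assume $qr \mid kr + \ell q$. Reducing modulo $q$ gives $q \mid kr$; since $\gcd(k,q) = 1$, this forces $q \mid r$. Symmetrically, reducing modulo $r$ and using $\gcd(\ell,r) = 1$ yields $r \mid q$. Hence $q = r$, after which $qr \mid kr + \ell q$ becomes $q^2 \mid q(k+\ell)$, i.e.\ $q \mid k + \ell$.

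Combining the two halves gives the stated dichotomy. The only mildly delicate point is the forward implication in the equivalence above, where the coprimality hypotheses are used in an essential way; everything else is just the geometric sum formula and clearing denominators. I do not anticipate any real obstacle.
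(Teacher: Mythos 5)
Your proposal is correct and follows essentially the same route as the paper: evaluate the sum as a geometric progression (using $q\mid x$, $r\mid x$ to get $z^x=1$), then show $\frac{k}{q}+\frac{\ell}{r}\in\mathbb{Z}$ forces $q=r$ and $q\mid k+\ell$ via the coprimality hypotheses. Your write-up of the forward implication (reducing $qr \mid kr+\ell q$ modulo $q$ and modulo $r$) is in fact slightly more detailed than the paper's, which simply asserts that $(kr+\ell q)/qr$ is not an integer when $q\neq r$.
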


\begin{proof}
The proof proceeds along the same line as the proof of  of Theorem 1 in \cite{ramM}.
We first note that the following holds by the partial-sum formula of a geometric series.
\begin{equation*}
\sum_{a=1}^x \exp \Bigl(2 \pi i \Bigl(\frac{k}{q}+\frac{\ell}{r} \Bigr)a \Bigr)=\begin{cases} x, & \text{if $k/q+\ell/r$ is an integer}; \\ 0, & \text{otherwise}. \end{cases}
\end{equation*}

If $q \neq r$, then $k/q+\ell/r=(kr+\ell q)/qr $ is not an integer since $\gcd(k,q)=\gcd(\ell,r)=1$. If $q=r$, then $k/q+\ell/r=(k+\ell)/q $ is an integer or not according to whether $k+\ell$ is congruent to $0 \Mod{q}$ or not. This completes the proof of Lemma \ref{lem2}.
\end{proof}

Now we can prove Lemma \ref{lem1}.

\begin{proof}[Proof of Lemma \ref{lem1}]
By the definition of the Ramanujan sums we have
\begin{align*}
\sum_{a=1}^x c_q(m-a) c_r(a-n) =& \sum_{a=1}^x \sum_{\substack{k=1 \\ \gcd(k,q)=1}}^q \exp \Bigl(\frac{2 \pi i k(m-a)}{q} \Bigr) \sum_{\substack{\ell=1 \\ \gcd(\ell,r)=1}}^r \exp \Bigl(\frac{2 \pi i \ell (a-n)}{r} \Bigr) \\
=& \sum_{\substack{k=1 \\ \gcd(k,q)=1}}^q \sum_{\substack{\ell=1 \\ \gcd(\ell,r)=1}}^r \exp \Bigl(2 \pi i (\frac{km}{q}-\frac{\ell n}{r}) \Bigr) \sum_{a=1}^x \exp \Bigl(2 \pi i (-\frac{k}{q}+\frac{\ell}{r})a \Bigr).
\end{align*}

Since
\begin{equation*}
\sum_{a=1}^x \exp \Bigl(2 \pi i (-\frac{k}{q}+\frac{\ell}{r})a \Bigr)=\begin{cases} x, & \text{if $q=r$ and $-k+\ell \equiv 0 \Mod{q}$}; \\ 0, & \text{otherwise} \end{cases} 
\end{equation*}
holds by Lemma \ref{lem2}, we have
\begin{align*}
\sum_{a=1}^x c_q(m-a) c_r(a-n) =& x \delta(q,r) \sum_{\substack{k=1 \\ \gcd(k,q)=1}}^q \sum_{\substack{\ell=1 \\ \gcd(\ell,q)=1 \\ \ell \equiv k \Mod q}}^q \exp \Bigl(2 \pi i (\frac{km}{q}-\frac{\ell n}{q}) \Bigr) \\
=& x \delta(q,r)\sum_{\substack{k=1 \\ \gcd(k,q)=1}}^q \exp \Bigl(\frac{2 \pi i}{q} (m-n) k \Bigr) \\
=& x \delta(q,r) c_q(m-n)=\begin{cases} x c_q(m-n), & \text{if $q=r$}; \\ 0, & \text{otherwise}. \end{cases}
\end{align*}
Therefore (\ref{eq:lem1-1}) holds.

Next we prove (\ref{eq:lem1-2}). We have
\begin{align*}
\sum_{a=1}^x S(m,a;q) S(a,n;r) =& \sum_{a=1}^x \sum_{\substack{k=1 \\ \gcd(k,q)=1}}^q \exp \Bigl(\frac{2 \pi i }{q} (m k+a k^*) \Bigr) \sum_{\substack{\ell=1 \\ \gcd(\ell,r)=1}}^r \exp \Bigl(\frac{2 \pi i }{r} (a \ell+n \ell^*) \Bigr) \\
= & \sum_{\substack{k=1 \\ \gcd(k,q)=1}}^q \sum_{\substack{\ell=1 \\ \gcd(\ell,r)=1}}^r \exp \Bigl(2 \pi i (\frac{mk}{q}+\frac{n \ell^* }{r}) \Bigr) \sum_{a=1}^x \exp \Bigl(2 \pi i (\frac{k^*}{q}+\frac{\ell}{r})a \Bigr).
\end{align*}

Since
\begin{equation*}
\sum_{a=1}^x \exp \Bigl(2 \pi i (\frac{k^*}{q}+\frac{\ell}{r})a \Bigr)=\begin{cases} x, & \text{if $q=r$ and $k^*+\ell \equiv 0 \Mod q$}; \\ 0, & \text{otherwise} \end{cases} 
\end{equation*}
holds by Lemma \ref{lem2} and since $k^*+\ell \equiv 0 \Mod{q}$ is equivalent to $\ell^* \equiv -k \Mod{q}$, we have
\begin{align*}
\sum_{a=1}^x S(m,a;q) S(a,n;r) =& x \delta(q,r) \sum_{\substack{k=1 \\ \gcd(k,q)=1}}^q \sum_{\substack{\ell=1 \\ \gcd(\ell,q)=1 \\ \ell^* \equiv -k \Mod q}}^q \exp \Bigl(2 \pi i (\frac{mk}{q}+\frac{n \ell^*}{q}) \Bigr) \\
=& x \delta(q,r)\sum_{\substack{k=1 \\ \gcd(k,q)=1}}^q \exp \Bigl(\frac{2 \pi i}{q} (m-n) k \Bigr) \\
=& x \delta(q,r) c_q(m-n)=\begin{cases} x c_q(m-n), & \text{if $q=r$}; \\ 0, & \text{otherwise}. \end{cases} 
\end{align*}
Therefore (\ref{eq:lem1-2}) holds. This completes the proof of Lemma \ref{lem1}.
\end{proof}

Consider the function $\widetilde{\varphi}$ defined by
\begin{equation*}
\widetilde{\varphi} (q)=\# \{ 1 \leq k \leq q; \ \gcd(k,q)=1 , \ \ k+k^* \equiv 0 \Mod{q} \} , 
\end{equation*}
where $k^*$ is the inverse of $k \Mod{q}$. 
It is easy to see that $\widetilde{\varphi} (q)$ can be rewritten as 
\begin{equation*}
\widetilde{\varphi} (q)=\# \{ 1 \leq k \leq q; \ \gcd(k,q)=1 , \ \ k^2 \equiv -1 \Mod{q} \}.
\end{equation*}

Recall that an arithmetic function $f:\mathbb{N} \mapsto \mathbb{C}$ is said to be a multiplicative function if $f$ satisfies
\begin{equation}
\nonumber
f(m n)=f(m)f(n) 
\end{equation}
for any $m,n \in  \ \mathbb{N} $ satisfying $\gcd (m,n)=1$.
We let $\mathcal{P}$ denote the set of prime numbers. We have the following lemma.

\begin{lemma}
\label{lem3}
The function $q \mapsto \widetilde{\varphi} (q)$ is a multiplicative function satisfying
\begin{align*}
&\widetilde{\varphi} (2)  = 1, \\
&\widetilde{\varphi} (2^e)  = 0 \ \ \text{if \ $e \geq 2$},  \\
&\widetilde{\varphi} (p^e)  = 0 \ \ \text{if \ $p \in \mathcal{P}, \ \ p \equiv -1 \Mod{4}$ \ and \ $e \geq 1 $},  \\
&\widetilde{\varphi} (p^e)  = 2 \ \ \text{if \ $p \in \mathcal{P},  \ \ p \equiv 1 \Mod{4}$ \ and \ $e \geq 1 $}.
\end{align*}
\end{lemma}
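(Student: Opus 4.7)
The plan is to establish multiplicativity first via the Chinese Remainder Theorem, and then evaluate $\widetilde{\varphi}$ on prime powers directly.

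For multiplicativity, suppose $\gcd(m,n)=1$. The CRT gives a bijection between $\{k \Mod{mn} : \gcd(k,mn)=1\}$ and $\{(k_1,k_2) : \gcd(k_1,m)=\gcd(k_2,n)=1\}$, where $k \equiv k_1 \Mod m$ and $k \equiv k_2 \Mod n$. Under this bijection, $k^2 \equiv -1 \Mod{mn}$ is equivalent to the simultaneous conditions $k_1^2 \equiv -1 \Mod m$ and $k_2^2 \equiv -1 \Mod n$. Counting pairs gives $\widetilde{\varphi}(mn)=\widetilde{\varphi}(m)\widetilde{\varphi}(n)$.

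For the values at prime powers, the case $q=2$ is immediate since $1^2 \equiv -1 \Mod 2$. For $q=2^e$ with $e \geq 2$, every odd $k$ satisfies $k^2 \equiv 1 \Mod 4$, so $k^2 \equiv -1 \Mod{2^e}$ has no solution because that would force $-1 \equiv 1 \Mod 4$. For an odd prime $p$, I would use the standard fact (Euler's criterion) that the congruence $k^2 \equiv -1 \Mod p$ has a solution if and only if $p \equiv 1 \Mod 4$; when $p \equiv -1 \Mod 4$ there are no solutions mod $p$, hence none mod $p^e$. When $p \equiv 1 \Mod 4$, let $k_0$ be a solution mod $p$; since $2k_0 \not\equiv 0 \Mod p$, Hensel's lemma lifts $k_0$ uniquely to a solution mod $p^e$ for every $e \geq 1$. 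The two solutions mod $p$, namely $k_0$ and $-k_0$, therefore produce exactly two solutions mod $p^e$, giving $\widetilde{\varphi}(p^e)=2$.

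The main conceptual step is the Hensel lifting from $p$ to $p^e$; the rest is bookkeeping. In particular, one must check that no extra solutions appear at higher powers and that the two lifts $k_0$ and $-k_0$ remain distinct mod $p^e$, which is clear since $p$ is odd and $k_0 \not\equiv -k_0 \Mod p$.
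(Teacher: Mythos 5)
Your proof is correct and follows the same route as the paper: multiplicativity via the Chinese Remainder Theorem, then evaluation at prime powers. The paper simply cites Vinogradov for the prime-power values, whereas you supply the standard argument in full (Euler's criterion for solvability of $k^2\equiv -1 \Mod{p}$, Hensel lifting to $p^e$, and the separate treatment of $2^e$); that is exactly the content of the cited reference, so there is nothing to object to.
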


\begin{proof}
Multiplicativity follows from the Chinese remainder theorem. The other parts follow from \cite[p.\ 91]{vi}.
\end{proof}

Next we prove the following lemma.
\begin{lemma}
\label{lem4}
If $x \in \mathbb{N}, \ q \mid x$ and $r \mid x$, then we have 
\begin{equation*}
\sum_{m=1}^x \sum_{a=1}^x c_q(m-a) S(a,m;r) =\begin{cases} x^2 \widetilde{\varphi} (q) , & \text{if $q=r$}; \\ 0, & \text{otherwise}. \end{cases}   
\end{equation*}
\end{lemma}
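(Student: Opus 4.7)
The proof will mirror the structure of Lemma~\ref{lem1}, with Lemma~\ref{lem2} doing the heavy lifting twice (once for the $m$-summation and once for the $a$-summation), and the definition of $\widetilde{\varphi}$ appearing naturally when the two resulting congruences are combined.

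First, I would expand using the definitions, writing
\begin{align*}
\sum_{m=1}^x \sum_{a=1}^x c_q(m-a) S(a,m;r)
&= \sum_{m,a=1}^{x}\sum_{\substack{k=1\\ \gcd(k,q)=1}}^{q}\sum_{\substack{\ell=1\\ \gcd(\ell,r)=1}}^{r}
\exp\!\Bigl(\tfrac{2\pi i k(m-a)}{q}\Bigr)\exp\!\Bigl(\tfrac{2\pi i}{r}(a\ell+m\ell^{*})\Bigr),
\end{align*}
and then collect the $m$ and $a$ dependencies separately. Because every term in the total exponent is linear in either $m$ or $a$, the $(m,a)$-sum factorizes as a product of two sums of the form treated in Lemma~\ref{lem2}: one with coefficient $\tfrac{k}{q}+\tfrac{\ell^{*}}{r}$ (from $m$) and one with coefficient $-\tfrac{k}{q}+\tfrac{\ell}{r}$ (from $a$).

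Next, applying Lemma~\ref{lem2} to each of those inner sums forces $q=r$ and yields the two congruences
\[
k+\ell^{*}\equiv 0\Mod{q},\qquad -k+\ell\equiv 0\Mod{q},
\]
with each surviving $(k,\ell)$-pair contributing $x\cdot x = x^{2}$ (the exponential prefactor collapses to $1$ because, as above, there is no $(m,a)$-independent term left over). The second congruence forces $\ell=k$ in the range $1\le \ell\le q$, and substituting into the first gives $k+k^{*}\equiv 0\Mod{q}$, i.e.\ $k^{2}\equiv -1\Mod{q}$. The number of such $k$ is, by definition, exactly $\widetilde{\varphi}(q)$.

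Putting everything together therefore gives
\[
\sum_{m=1}^x \sum_{a=1}^x c_q(m-a) S(a,m;r) \;=\; x^{2}\,\delta(q,r)\,\widetilde{\varphi}(q),
\]
which is the claim. I do not anticipate a real obstacle here: the only delicate point is bookkeeping when combining the two congruences produced by Lemma~\ref{lem2}, specifically recognizing that the compatibility condition $\ell=k$ together with $k+\ell^{*}\equiv 0\Mod q$ is precisely the defining condition of $\widetilde{\varphi}(q)$ (equivalently $k^{2}\equiv -1\Mod q$).
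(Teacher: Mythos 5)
Your proposal is correct and matches the paper's own proof essentially step for step: the same expansion, the same factorization of the $(m,a)$-sum into two applications of Lemma~\ref{lem2} with coefficients $\tfrac{k}{q}+\tfrac{\ell^{*}}{r}$ and $-\tfrac{k}{q}+\tfrac{\ell}{r}$, and the same reduction of the two congruences to $\ell\equiv k$ and $k+k^{*}\equiv 0\Mod{q}$, counted by $\widetilde{\varphi}(q)$. Your observation that no $(m,a)$-independent exponential prefactor survives is also exactly what happens in the paper.
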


\begin{proof}
First, we have
\begin{align}
& \sum_{m=1}^x \sum_{a=1}^x c_q(m-a) S(a,m;r) \nonumber \\
&= \sum_{m=1}^x \sum_{a=1}^x \sum_{\substack{k=1 \\ \gcd(k,q)=1}}^q \exp \Bigl(\frac{2 \pi i k(m-a)}{q} \Bigr) \sum_{\substack{\ell=1 \\ \gcd(\ell,r)=1}}^r \exp \Bigl(\frac{2 \pi i }{r} (a \ell+m \ell^*) \Bigr) \nonumber \\
&= \sum_{\substack{k=1 \\ \gcd(k,q)=1}}^q \sum_{\substack{\ell=1 \\ \gcd(\ell,r)=1}}^r \sum_{m=1}^x \exp \Bigl(2 \pi i (\frac{k}{q}+\frac{\ell^* }{r})m \Bigr) \sum_{a=1}^x \exp \Bigl(2 \pi i (-\frac{k}{q}+\frac{\ell}{r})a \Bigr). \label{eq:lem4-1}
\end{align}

Since
\begin{align*}
&\sum_{m=1}^x \exp \Bigl(2 \pi i (\frac{k}{q}+\frac{\ell^*}{r})m \Bigr)=\begin{cases} x, & \text{if $q=r$ and $k+\ell^* \equiv 0 \Mod q$}; \\ 0, & \text{otherwise}, \end{cases} \\
&\sum_{a=1}^x \exp \Bigl(2 \pi i (-\frac{k}{q}+\frac{\ell}{r})a \Bigr)=\begin{cases} x, & \text{if $q=r$ and $-k+\ell \equiv 0 \Mod q$}; \\ 0, & \text{otherwise} \end{cases} 
\end{align*}
hold by Lemma \ref{lem2}, and since
\begin{align*}
& k+\ell^* \equiv 0 \Mod{q} \quad \text{and} \quad -k+\ell \equiv 0 \Mod{q}
\intertext{is equivalent to}
& \ell \equiv k \Mod{q} \quad \text{and} \quad k+k^* \equiv 0 \Mod{q}, 
\end{align*}
we see that (\ref{eq:lem4-1}) is equal to
\begin{align*}
&  x^2 \delta(q,r) \sum_{\substack{k=1 \\ \gcd(k,q)=1 \\ k+k^* \equiv 0 \Mod q}}^q \sum_{\substack{\ell=1 \\ \gcd(\ell,q)=1 \\ \ell \equiv k \Mod q}}^q 1 \quad
= \quad x^2 \delta(q,r)\sum_{\substack{k=1 \\ \gcd(k,q)=1 \\ k+k^* \equiv 0 \Mod q }}^q 1 \\
&= x^2 \delta(q,r) \widetilde{\varphi} (q) = \begin{cases} x^2 \widetilde{\varphi} (q) , & \text{if $q=r$}; \\ 0, & \text{otherwise}. \end{cases} 
\end{align*}
This completes the proof of Lemma \ref{lem4}.
\end{proof}

\section{Some results}

\subsection{The case of Ramanujan sums}
Let $c_q(n)$ be the Ramanujan sums.
We first consider the following $q \times q$ matrix 
\begin{equation*}
A_q=(c_q (m-n))_{m,n=1}^q=\left( \begin{array}{ccccc} c_q(0) & c_q(1) & c_q(2) & \cdots & c_q(q-1)    \\ c_q(1) & c_q(0) & c_q(1) & \cdots & c_q(q-2) \\ c_q(2) & c_q(1) & c_q(0) & \cdots & c_q(q-3) \\ \vdots & \vdots & \vdots & \ddots & \vdots   \\ c_q(q-1) & c_q(q-2) & c_q(q-3) & \cdots & c_q(0)  \\ \end{array}  \right),
\end{equation*}
where $q$ is a fixed positive integer.
We begin with the following lemma.

\begin{lemma} 
\label{lem5}
Let $A_q=(c_q (m-n))_{m,n=1}^q$. For every integer $j \geq 2$, we have 
\begin{equation}
 A_q^j=q^{j-1}A_q .  \label{eq:lem5-1}
\end{equation}
\end{lemma}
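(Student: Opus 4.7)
The plan is to reduce the identity to the case $j=2$ and then use a one-line induction. The crucial ingredient is equation (\ref{eq:lem1-1}) of Lemma \ref{lem1}, which immediately computes products of the relevant form once we verify the divisibility hypothesis.

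First I would treat the base case $j = 2$. The $(m,n)$ entry of $A_q^2$ is by definition
\begin{equation*}
(A_q^2)_{mn} = \sum_{a=1}^q c_q(m-a)\, c_q(a-n).
\end{equation*}
I apply Lemma \ref{lem1} with $x = q$ and $r = q$; the hypotheses $q \mid x$ and $r \mid x$ are trivially satisfied, and since $q = r$ the right-hand side of (\ref{eq:lem1-1}) equals $x\, c_q(m-n) = q\, c_q(m-n)$. Therefore $(A_q^2)_{mn} = q\, c_q(m-n) = q\, (A_q)_{mn}$, so $A_q^2 = q A_q$.

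Next I would proceed by induction on $j \geq 2$. The case $j = 2$ has just been handled. Assuming $A_q^{j-1} = q^{j-2} A_q$ for some $j \geq 3$, I compute
\begin{equation*}
A_q^{j} = A_q^{j-1} \cdot A_q = q^{j-2} A_q \cdot A_q = q^{j-2} \cdot q A_q = q^{j-1} A_q,
\end{equation*}
which closes the induction.

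There is no genuine obstacle here: the content of the lemma is essentially repackaged from the orthogonality identity (\ref{eq:lem1-1}), and the only thing to check is that the matrix size $q$ is itself a multiple of $q$ so that Lemma \ref{lem1} applies. The algebraic relation $A_q^2 = qA_q$ is the heart of the statement, and iterating it gives the general formula.
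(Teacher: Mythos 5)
Your proof is correct and follows exactly the same route as the paper: apply Lemma \ref{lem1} with $x = r = q$ to get the base case $A_q^2 = qA_q$, then iterate by induction. The only difference is that you spell out the inductive step, which the paper leaves to the reader.
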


\begin{proof} 
We first prove (\ref{eq:lem5-1}) for $j=2$. From Lemma \ref{lem1} we see that the $(m,n)$ entry of $A_q^2$ equals
\begin{equation*}
\sum_{a=1}^q c_q(m-a) c_q(a-n)= q c_q (m-n),
\end{equation*}
which is equal to the $(m,n)$ entry of $q A_q$.
The general case $j \geq 2$ follows by induction.
\end{proof}

We let $\mathrm{tr} (M)=\sum_{m=1}^k M_{mm}$ denote the trace of $M$ where $M$ is a $k \times k$ matrix.
Next we consider the trace of $A_q^j$ where $j \in \mathbb{N}$. 

\begin{lemma} 
\label{lem6}
Let $A_q=(c_q (m-n))_{m,n=1}^q$. Then we have for every $j \in \mathbb{N}$
\begin{equation}
\mathrm{tr} (A_q^j)  = q^j \varphi(q). \label{eq:lem6-1}
\end{equation}
\end{lemma}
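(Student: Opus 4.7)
The plan is to reduce everything to the case $j=1$ by invoking Lemma \ref{lem5}, and then to compute the trace of $A_q$ directly from the definition.

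First I would handle the base case $j=1$. The diagonal entries of $A_q$ are all $c_q(0)$, which equals $\varphi(q)$ by (\ref{eq-ram-02}). Summing the $q$ diagonal entries gives $\mathrm{tr}(A_q) = q \varphi(q) = q^1 \varphi(q)$, matching (\ref{eq:lem6-1}).

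For $j \geq 2$, I would apply Lemma \ref{lem5}, which tells us $A_q^j = q^{j-1} A_q$. Taking the trace of both sides and using linearity of the trace yields
\begin{equation*}
\mathrm{tr}(A_q^j) = q^{j-1}\, \mathrm{tr}(A_q) = q^{j-1} \cdot q \varphi(q) = q^j \varphi(q),
\end{equation*}
which is exactly (\ref{eq:lem6-1}).

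There is essentially no obstacle here: all the work has already been done in Lemma \ref{lem5} (which in turn rested on Lemma \ref{lem1}), and the remaining ingredient is the elementary identity $c_q(0) = \varphi(q)$. The only thing to be careful about is to split off the $j=1$ case, since Lemma \ref{lem5} is stated only for $j \geq 2$; but as shown above, $j=1$ follows immediately from (\ref{eq-ram-02}).
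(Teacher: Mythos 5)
Your proof is correct and follows exactly the same route as the paper: compute $\mathrm{tr}(A_q)=q\varphi(q)$ from $c_q(0)=\varphi(q)$, then handle $j\geq 2$ via Lemma \ref{lem5} and linearity of the trace. Nothing to add.
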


\begin{proof}
Since $c_q(0)=\varphi(q)$, we have
\begin{equation*}
\mathrm{tr} (A_q) =\sum_{m=1}^q c_q(m-m)=\sum_{m=1}^q c_q (0) = \sum_{m=1}^q \varphi(q)= q \varphi(q).
\end{equation*}

Therefore (\ref{eq:lem6-1}) holds for $j=1$. If $j \geq 2$, then we have by Lemma \ref{lem5} and the above result
\begin{align*}
\mathrm{tr} (A_q^j) =q^{j-1} \mathrm{tr} (A_q)= q^{j-1} q \varphi(q)=q^j \varphi(q).
\end{align*}
\end{proof}

Let $E_q$ denote the $q \times q$ identity matrix and let $\det (M)$ denote the determinant of $M$ where $M$ is a square matrix.
We prove the following theorem.

\begin{theorem} 
\label{th1} 
Let $A_q=(c_q (m-n))_{m,n=1}^q$. Then the characteristic polynomial of $A_q$ is
\begin{equation*}
\det(\lambda E_q-A_q)=\lambda^{q-\varphi(q)} (\lambda -q)^{\varphi(q)}. 
\end{equation*}

Especially, the matrix $A_q$ has eigenvalues $0,q $ with multiplicity $q-\varphi(q), \varphi(q)$, respectively.
\end{theorem}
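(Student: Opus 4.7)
The plan is to combine Lemma \ref{lem5} with Lemma \ref{lem6}. First, I would specialize Lemma \ref{lem5} to the case $j = 2$, which gives $A_q^2 = q A_q$, or equivalently $A_q(A_q - q E_q) = 0$. This shows that the polynomial $\lambda(\lambda - q)$ annihilates $A_q$, so the minimal polynomial of $A_q$ divides $\lambda(\lambda - q)$. In particular, every eigenvalue of $A_q$ lies in the set $\{0, q\}$, and so the characteristic polynomial must factor as
\begin{equation*}
\det(\lambda E_q - A_q) = \lambda^{a}(\lambda - q)^{b}
\end{equation*}
for some nonnegative integers $a, b$ with $a + b = q$.

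Next I would pin down $a$ and $b$ by taking the trace. Since the trace equals the sum of the eigenvalues counted with algebraic multiplicity, we have $\mathrm{tr}(A_q) = 0 \cdot a + q \cdot b = q b$. On the other hand, Lemma \ref{lem6} (applied with $j = 1$) gives $\mathrm{tr}(A_q) = q \varphi(q)$. Equating the two expressions yields $b = \varphi(q)$ and consequently $a = q - \varphi(q)$, which is exactly the claimed factorization. The conclusion about the eigenvalues $0$ and $q$ with the stated multiplicities is then immediate.

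There is essentially no obstacle here: the two preceding lemmas were clearly set up to hand this proof to us for free. The only small point worth a line of justification is why the identity $A_q^2 = q A_q$ forces the eigenvalues into $\{0, q\}$; one could either invoke that the minimal polynomial divides any annihilating polynomial, or argue directly that if $A_q v = \mu v$ with $v \neq 0$, then $\mu^2 v = A_q^2 v = q A_q v = q \mu v$, so $\mu(\mu - q) = 0$. Either way, a short paragraph suffices.
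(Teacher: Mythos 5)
Your proof is correct, but it takes a genuinely different route from the paper. You use the relation $A_q^2=qA_q$ from Lemma \ref{lem5} as an annihilating polynomial to localize the spectrum in $\{0,q\}$, and then a single trace computation (Lemma \ref{lem6} with $j=1$) to determine the algebraic multiplicities. The paper instead feeds the traces of \emph{all} powers $A_q^j$ into the identity $\det(\exp(M))=\exp(\mathrm{tr}(M))$, sums the resulting logarithmic series to get $\mathrm{tr}(\log(E_q-\tfrac{1}{\lambda}A_q))=\varphi(q)\log(1-\tfrac{q}{\lambda})$ for $\lambda>q$, and then extends the resulting identity of polynomials to all $\lambda$. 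Your argument is shorter and more elementary: it avoids the convergence issues of the matrix logarithm, the restriction $\lambda>q$, and the final analytic-continuation step, and your eigenvalue computation $\mu(\mu-q)=0$ is fully justified. What the paper's heavier machinery buys is uniformity: the same log-det computation is reused verbatim for Theorem \ref{th3} and Theorem \ref{th4}, where the spectrum is $\{0,q,-q\}$ and the odd and even power traces differ; your method would also extend there, but would then require two trace equations (for $\mathrm{tr}(B_q)$ and $\mathrm{tr}(B_q^2)$) to separate the multiplicities of $q$ and $-q$, rather than one.
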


\begin{proof}
We first suppose $\lambda > q$. Since $\det (\exp (M))=\exp( \mathrm{tr} (M))$ holds for any square matrix $M$, we have
\begin{equation*}
\det(\lambda E_q-A_q)= \lambda^q \det( E_q-\frac{1}{\lambda} A_q)= \lambda^q \exp( \mathrm{tr} (\log (E_q-\frac{1}{\lambda} A_q))).
\end{equation*}

Since
\begin{equation*}
\log (E_q-\frac{1}{\lambda} A_q )= \sum_{j=1}^\infty \frac{(-1)^{j-1}}{j} (-\frac{1}{\lambda} A_q)^j= \sum_{j=1}^\infty \frac{(-1)^{j-1}}{j} (-\frac{1}{\lambda})^j A_q^j
\end{equation*}
holds, we have by Lemma \ref{lem6}
\begin{align*}
\mathrm{tr} (\log (E_q-\frac{1}{\lambda} A_q ))=& \sum_{j=1}^\infty \frac{(-1)^{j-1}}{j} (-\frac{1}{\lambda})^j \mathrm{tr} (A_q^j) \\
=& \sum_{j=1}^\infty \frac{(-1)^{j-1}}{j} (-\frac{1}{\lambda})^j q^j \varphi(q) = \varphi(q) \log(1-\frac{q}{\lambda}).
\end{align*}

Therefore we obtain
\begin{align}
\det(\lambda E_q-A_q)=& \lambda^q \exp \Bigl(\varphi(q) \log(1-\frac{q}{\lambda}) \Bigr) \nonumber \\
 =&  \lambda^q (1-\frac{q}{\lambda})^{\varphi(q)} = \lambda^{q-\varphi(q)} (\lambda -q)^{\varphi(q)}. \label{eq:th1-1}
\end{align}

Since (\ref{eq:th1-1}) holds for any $\lambda > q$ and both sides of (\ref{eq:th1-1}) are polynomials of $\lambda$, we see that (\ref{eq:th1-1}) holds for any $\lambda$.
This completes the proof of Theorem \ref{th1}.
\end{proof}

\begin{remark}
A $q \times q$ matrix $C$ is called a circulant matrix if $C$ takes the form
\begin{equation}
C=
\begin{pmatrix}
c(0)     & c(q-1) & \cdots  & c(2) & c(1)  \\
c(1) & c(0)    & c(q-1) &         & c(2)  \\
\vdots  & c(1)& c(0)    & \ddots  & \vdots   \\
c(q-2)  &        & \ddots & \ddots  & c(q-1)   \\
c(q-1)  & c(q-2) & \cdots  & c(1) & c(0) \\
\end{pmatrix}. \label{eq:rem1-1}
\end{equation}

We immediately see that $A_q=(c_q (m-n))_{m,n=1}^q$ is a circulant matrix since $c_q(a)=c_q(-a)=c_q(q-a)$ holds for any $1 \leq a \leq q$. It is well known that the eigenvalues of (\ref{eq:rem1-1}) are given by
\begin{equation}
\lambda_j=\sum_{a=1}^q c(q-a) \omega_j^a, \label{eq:circu}
\end{equation}
where $\omega_j=\exp (2 \pi i j / q )$ and $1 \leq j \leq q$. Substituting $c(q-a)$ by $c_q(q-a)=c_q(a)$, we see that the eigenvalues of $A_q$ are equal to
\begin{align*}
\lambda_j&= \sum_{a=1}^{q} c_q(a) \omega_j^a=\sum_{a=1}^{q} \sum_{\substack{k=1 \\ \gcd(k,q)=1}}^q \exp \Bigl(\frac{2 \pi i ka}{q} \Bigr) \exp \Bigl(\frac{2 \pi i ja}{q} \Bigr) \\
&= \sum_{\substack{k=1 \\ \gcd(k,q)=1}}^q \sum_{a=1}^{q} \exp \Bigl(\frac{2 \pi i a(k+j)}{q} \Bigr)=\begin{cases} q , & \text{if $\gcd(j,q)=1$}; \\ 0, & \text{otherwise}. \end{cases}
\end{align*}
Thus we can easily obtain the eigenvalues of $A_q$. However, we proved Theorem 1 without using a property of a circulant matrix since the proof of Theorem 1 applies also to that of Theorem 3. 

As for the eigenvectors of a circulant matrix, it is also well known that the eigenvectors of (\ref{eq:rem1-1}) are given by
\begin{equation}
v_j=   (1,\omega_j,\omega_j^2, \cdots, \omega_j^{q-1}), \label{eq:circu2}
\end{equation}
where $\omega_j=\exp (2 \pi i j / q )$ and $1 \leq j \leq q$. 
\end{remark}

From the above remark we immediately obtain the following corollary.

\begin{corollary}
Let $A_q=(c_q (m-n))_{m,n=1}^q$. Then the eigenvalues $\lambda_j$ and the eigenvectors $v_j \  (1 \leq j \leq q)$  of $A_q$ are given by the following.

If $\gcd(j,q)=1$, then
\begin{align*}
& \lambda_j=q, \\
& v_j= (\underbrace{1,\omega_j,\omega_j^2, \cdots, \omega_j^{q-1}}_{q}),
\intertext{and if $\gcd(j,q)>1$, then}
& \lambda_j=0, \\
& v_j= (\underbrace{1,\omega_j,\omega_j^2, \cdots, \omega_j^{q'-1}}_{q'},\underbrace{1,\omega_j,\omega_j^2, \cdots, \omega_j^{q'-1}}_{q'}, \cdots,\underbrace{1,\omega_j,\omega_j^2, \cdots, \omega_j^{q'-1}}_{q'}),
\end{align*}
where  $\omega_j=\exp (2 \pi i j / q )$ and $q'=q/\gcd(j,q)$.
\end{corollary}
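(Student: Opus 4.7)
The plan is to read the Corollary directly off the two formulas (\ref{eq:circu}) and (\ref{eq:circu2}) recorded in the preceding remark. These apply to $A_q$ because the identity $c_q(a) = c_q(q-a)$, valid for $1 \le a \le q$, makes $A_q$ a circulant matrix. For the eigenvalue claim, the computation already performed in the remark gives $\lambda_j = q$ when $\gcd(j,q) = 1$ and $\lambda_j = 0$ otherwise, so nothing further is required on that side.

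For the eigenvector claim, the circulant formula (\ref{eq:circu2}) supplies $v_j = (1, \omega_j, \omega_j^2, \ldots, \omega_j^{q-1})$ with $\omega_j = \exp(2\pi i j/q)$ in every case. When $\gcd(j,q) = 1$ this is already the form asserted. When $d := \gcd(j,q) > 1$ and $q' := q/d$, I would observe that $\omega_j^{q'} = \exp(2\pi i j q'/q) = \exp(2\pi i j/d) = 1$, because $d \mid j$. Hence the sequence $a \mapsto \omega_j^a$ is periodic in $a$ with period $q'$, and since $q = d q'$ the tuple $(1, \omega_j, \ldots, \omega_j^{q-1})$ decomposes into exactly $d$ identical blocks $(1, \omega_j, \ldots, \omega_j^{q'-1})$, which is precisely the form written in the Corollary.

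Since the whole argument reduces to invoking circulant diagonalization and performing the short periodicity calculation $\omega_j^{q'} = 1$, I do not anticipate any genuine obstacle; the Corollary is an essentially immediate bookkeeping consequence of the remark, with the only non-mechanical step being the rewriting of the standard circulant eigenvector in block form when $\gcd(j,q) > 1$.
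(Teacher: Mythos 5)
Your proposal is correct and follows exactly the route the paper intends: the paper offers no separate proof of this corollary beyond the words ``From the above remark we immediately obtain,'' i.e.\ it reads the eigenvalues off the computation in the remark and the eigenvectors off the standard circulant formula (\ref{eq:circu2}). Your added observation that $\omega_j^{q'}=\exp(2\pi i j/d)=1$ because $d=\gcd(j,q)$ divides $j$, which justifies the block decomposition of $v_j$ when $\gcd(j,q)>1$, is the one small detail the paper leaves implicit, and you supply it correctly.
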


Next we consider the $ x \times x$ matrix $X=(X_{mn})_{m,n=1}^x=(\sum_{q=1}^Q c_q (m-n))_{m,n=1}^x$ where $Q$ is a fixed positive integer and $x$ is the least common multiple of $1,2, \cdots, Q$.
We proceed along similar lines to the case of $A_q$.

\begin{lemma} 
\label{lem7}
Let $X=(X_{mn})_{m,n=1}^x=(\sum_{q=1}^Q c_q (m-n))_{m,n=1}^x$ where $x$ is the least common multiple of $1,2, \cdots, Q$. For every integer $j \geq 2$, we have 
\begin{equation}
X^j=x^{j-1}X . \label{eq:lem7-1}
\end{equation}
\end{lemma}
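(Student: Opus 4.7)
The plan is to mimic the proof of Lemma \ref{lem5}, replacing the single Ramanujan sum $c_q$ with the double sum that defines the entries of $X$, and exploiting the fact that every $q \in \{1,2,\ldots,Q\}$ divides $x = \operatorname{lcm}(1,2,\ldots,Q)$, so that Lemma \ref{lem1} applies to every pair $(q,r)$ with $1 \leq q,r \leq Q$.

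First I would handle the base case $j=2$. The $(m,n)$ entry of $X^2$ is
\begin{equation*}
\sum_{a=1}^{x} X_{ma}X_{an} = \sum_{a=1}^{x} \Bigl(\sum_{q=1}^{Q} c_q(m-a)\Bigr) \Bigl(\sum_{r=1}^{Q} c_r(a-n)\Bigr) = \sum_{q=1}^{Q}\sum_{r=1}^{Q} \sum_{a=1}^{x} c_q(m-a)c_r(a-n).
\end{equation*}
Since $q \mid x$ and $r \mid x$ for all $q,r \leq Q$, equation (\ref{eq:lem1-1}) of Lemma \ref{lem1} shows that the inner sum over $a$ equals $x\, c_q(m-n)$ when $q=r$ and vanishes otherwise. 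Hence the double sum collapses to $\sum_{q=1}^{Q} x\, c_q(m-n) = x\, X_{mn}$, giving $X^2 = xX$.

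For $j \geq 3$ I would proceed by induction. Assuming $X^{j-1} = x^{j-2}X$, one computes
\begin{equation*}
X^j = X^{j-1} \cdot X = x^{j-2} X \cdot X = x^{j-2} X^2 = x^{j-2} \cdot x X = x^{j-1} X,
\end{equation*}
which closes the induction.

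There is no real obstacle here; the whole argument reduces, via Lemma \ref{lem1}, to the orthogonality relation for Ramanujan sums summed over a period that is a common multiple of the moduli. The only delicate point worth flagging is the hypothesis $x = \operatorname{lcm}(1,\ldots,Q)$: without it, the cross terms with $q \neq r$ would not all vanish, and the identity $X^2 = xX$ would fail. Everything else is mechanical.
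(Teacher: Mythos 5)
Your proof is correct and follows essentially the same route as the paper: expand the $(m,n)$ entry of $X^2$ as a double sum over $q,r \leq Q$, apply the orthogonality relation (\ref{eq:lem1-1}) of Lemma \ref{lem1} (valid because every such $q,r$ divides $x$) to kill the cross terms, and then induct on $j$. The paper's proof is identical, merely stating the induction step rather than writing it out.
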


\begin{proof}
First, we prove (\ref{eq:lem7-1}) for $j=2$. From Lemma \ref{lem1}, we see that the $(m,n)$ component of $X^2$ equals
\begin{align*}
(X^2)_{mn}= & \sum_{k=1}^x X_{m k} X_{k n} =\sum_{k=1}^x (\sum_{q=1}^Q  c_q (m-k)) (\sum_{r=1}^Q  c_r (k-n)) \\
 =& \sum_{q=1}^Q \sum_{r=1}^Q \sum_{k=1}^x c_q(m-k) c_r(k-n) = \sum_{q=1}^Q \sum_{r=1}^Q x \delta (q,r) c_q(m-n) \\
= & x \sum_{q=1}^Q  c_q(m-n)=x X_{mn}.
\end{align*}

Hence, (\ref{eq:lem7-1}) holds for $j=2$.
The general case $j \geq 2$ follows by induction.
\end{proof}

Next we consider the trace of $X^j$ where $j \in \mathbb{N}$. 
We set $\Phi(Q)=\sum_{q=1}^Q \varphi(q)$.

\begin{lemma} 
\label{lem8}
Let $X=(X_{mn})_{m,n=1}^x=(\sum_{q=1}^Q c_q (m-n))_{m,n=1}^x$ where $x$ is the least common multiple of $1,2, \cdots, Q$. Then  we have for every $j \in \mathbb{N}$
\begin{equation}
\mathrm{tr} (X^j)  = x^j \Phi(Q). \label{eq:lem8-1}
\end{equation}
\end{lemma}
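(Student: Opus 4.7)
The plan is to follow exactly the template of Lemma \ref{lem6}, replacing the role of $A_q$ by $X$ and the role of $\varphi(q)$ by $\Phi(Q)$. The key ingredients are already at hand: the diagonal values of $X$, the identity $c_q(0)=\varphi(q)$ from (\ref{eq-ram-02}), and the reduction $X^j=x^{j-1}X$ from Lemma \ref{lem7}.

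First I would treat the base case $j=1$ by direct computation. Since $X_{mm}=\sum_{q=1}^Q c_q(m-m)=\sum_{q=1}^Q c_q(0)=\sum_{q=1}^Q \varphi(q)=\Phi(Q)$, independently of $m$, summing over the $x$ diagonal entries gives $\mathrm{tr}(X)=x\,\Phi(Q)$, which is (\ref{eq:lem8-1}) for $j=1$.

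For $j\geq 2$, I would invoke Lemma \ref{lem7}, which asserts $X^j=x^{j-1}X$. Taking traces of both sides and using the $j=1$ case yields
\begin{equation*}
\mathrm{tr}(X^j)=x^{j-1}\,\mathrm{tr}(X)=x^{j-1}\cdot x\,\Phi(Q)=x^j\,\Phi(Q),
\end{equation*}
which is exactly (\ref{eq:lem8-1}).

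There is no real obstacle here: the only substantive input is Lemma \ref{lem7}, whose proof in turn rests on the orthogonality relation (\ref{eq:lem1-1}) of Lemma \ref{lem1}, which is what makes the cross terms $c_q(m-k)c_r(k-n)$ with $q\neq r$ vanish after summation in $k$. The structural observation is that $X$ behaves like $x$ times a projection (up to the identification in Lemma \ref{lem7}), so all its nonzero eigenvalues equal $x$, and the trace formula simply records the multiplicity $\Phi(Q)$, in analogy with Theorem \ref{th1}; this will presumably be the content of the subsequent theorem on the characteristic polynomial of $X$.
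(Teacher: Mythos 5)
Your proof is correct and follows exactly the same route as the paper's: the base case $j=1$ is computed directly from $X_{mm}=\sum_{q=1}^Q c_q(0)=\Phi(Q)$, and the case $j\geq 2$ reduces to it via Lemma \ref{lem7} and linearity of the trace. No gaps.
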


\begin{proof}
Since $c_q(0)=\varphi(q)$, we have 
\begin{align*}
\mathrm{tr} (X) =\sum_{m=1}^x X_{mm}=\sum_{m=1}^x \sum_{q=1}^Q  c_q (0) =\sum_{m=1}^x \sum_{q=1}^Q  \varphi(q) =x \Phi(Q).
\end{align*}

Thus (\ref{eq:lem8-1}) holds for $j=1$. If $j \geq 2$, then we have by Lemma \ref{lem7} and the above result
\begin{align*}
\mathrm{tr} (X^j) =x^{j-1} \mathrm{tr} (X)= x^{j-1} x \Phi(Q)=x^j \Phi(Q).
\end{align*}

Therefore (\ref{eq:lem8-1}) holds for  every $j \in \mathbb{N}$.
\end{proof}

Let $E_x$ be the $x \times x$ identity matrix where $x$ is a positive integer.
We prove the following theorem.

\begin{theorem} 
\label{th2} 
Let $X=(X_{mn})_{m,n=1}^x=(\sum_{q=1}^Q c_q (m-n))_{m,n=1}^x$ where $x$ is the least common multiple of $1,2, \cdots, Q$. Then the characteristic polynomial of $X$ is
\begin{equation*}
\det(\lambda E_x-X)=\lambda^{x-\Phi(Q)} (\lambda -x)^{\Phi(Q)}.
\end{equation*}

Especially, the matrix $X$ has eigenvalues $0,x $ with multiplicity  $x-\Phi(Q), \Phi(Q)$,  respectively.
\end{theorem}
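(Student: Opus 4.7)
The plan is to imitate the proof of Theorem \ref{th1} verbatim, with $A_q$ replaced by $X$, $q$ replaced by $x$, and $\varphi(q)$ replaced by $\Phi(Q)$. The two structural ingredients needed for that argument, namely $X^j = x^{j-1} X$ for $j \geq 2$ and $\mathrm{tr}(X^j) = x^j \Phi(Q)$, have already been established in Lemma \ref{lem7} and Lemma \ref{lem8}, so the work reduces to stringing them together through the $\det = \exp \circ \mathrm{tr} \circ \log$ identity.

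Concretely, first I would fix $\lambda > x$ so that the series for $\log(E_x - \tfrac{1}{\lambda} X)$ converges in operator norm. Then I would write
\begin{equation*}
\det(\lambda E_x - X) = \lambda^x \det(E_x - \tfrac{1}{\lambda} X) = \lambda^x \exp\!\bigl(\mathrm{tr}\,\log(E_x - \tfrac{1}{\lambda} X)\bigr),
\end{equation*}
expand the logarithm as $\sum_{j \geq 1} \frac{(-1)^{j-1}}{j}(-\tfrac{1}{\lambda} X)^j$, take the trace term by term, and substitute $\mathrm{tr}(X^j) = x^j \Phi(Q)$ from Lemma \ref{lem8}. The resulting series collapses to $\Phi(Q) \log(1 - x/\lambda)$, exactly as in the proof of Theorem \ref{th1} with the roles of $q$ and $\varphi(q)$ swapped for $x$ and $\Phi(Q)$.

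Exponentiating gives $\det(\lambda E_x - X) = \lambda^x (1 - x/\lambda)^{\Phi(Q)} = \lambda^{x - \Phi(Q)}(\lambda - x)^{\Phi(Q)}$. Finally, since both sides are polynomials in $\lambda$ that agree on the open interval $(x, \infty)$, they must be identically equal, which establishes the characteristic polynomial for all $\lambda$ and in particular identifies the eigenvalues $0$ and $x$ with the claimed multiplicities.

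There is no real obstacle here since the two preparatory lemmas do all the heavy lifting; the only mild care I would take is to confirm that the convergence of the logarithmic series holds for $\lambda$ sufficiently large (which it does, since the partial sums of $(\tfrac{1}{\lambda}X)^j$ are controlled via $X^j = x^{j-1} X$, making the series a geometric one in $x/\lambda$), and to invoke the polynomial identity principle at the end rather than trying to extend the analytic argument directly to all $\lambda \in \mathbb{C}$.
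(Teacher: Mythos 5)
Your proposal matches the paper's own proof of Theorem \ref{th2} essentially verbatim: the paper likewise fixes $\lambda > x$, applies $\det(\exp(M)) = \exp(\mathrm{tr}(M))$, expands $\log(E_x - \tfrac{1}{\lambda}X)$ termwise using Lemma \ref{lem8} to collapse the trace series to $\Phi(Q)\log(1 - x/\lambda)$, and concludes by the polynomial identity principle. The argument is correct and complete.
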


\begin{proof}
The proof proceeds along the same lines as the proof of Theorem \ref{th1}.
We first suppose $\lambda > x$. Since $\det (\exp (M))=\exp( \mathrm{tr} (M))$ holds for any square matrix $M$, we have
\begin{equation*}
\det(\lambda E_x-X)= \lambda^x \det( E_x-\frac{1}{\lambda} X)= \lambda^x \exp( \mathrm{tr} (\log (E_x-\frac{1}{\lambda} X))) .
\end{equation*}

Since
\begin{equation*}
\log (E_x-\frac{1}{\lambda} X )= \sum_{j=1}^\infty \frac{(-1)^{j-1}}{j} (-\frac{1}{\lambda} X)^j= \sum_{j=1}^\infty \frac{(-1)^{j-1}}{j} (-\frac{1}{\lambda})^j X^j
\end{equation*}
holds, we have by Lemma \ref{lem8}
\begin{align*}
\mathrm{tr} (\log (E_x-\frac{1}{\lambda} X ))=& \sum_{j=1}^\infty \frac{(-1)^{j-1}}{j} (-\frac{1}{\lambda})^j \mathrm{tr} (X^j) \\
=& \sum_{j=1}^\infty \frac{(-1)^{j-1}}{j} (-\frac{1}{\lambda})^j x^j \Phi(Q) = \Phi(Q) \log(1-\frac{x}{\lambda}).
\end{align*}

Therefore we have
\begin{align}
\det(\lambda E_x-X)=& \lambda^x \exp \Bigl(\Phi(Q) \log(1-\frac{x}{\lambda}) \Bigr)  \nonumber \\
 =&  \lambda^x (1-\frac{x}{\lambda})^{\Phi(Q)} = \lambda^{x-\Phi(Q)} (\lambda -x)^{\Phi(Q)}. \label{eq:th2-1}
\end{align}

Since (\ref{eq:th2-1}) holds for any $\lambda > x$ and both sides of (\ref{eq:th2-1}) are polynomials of $\lambda$, (\ref{eq:th2-1}) holds for any $\lambda$.
This completes the proof of Theorem \ref{th2}.
\end{proof}

\begin{remark}
$X$ as well as $A_q$ is a circulant matrix. By (\ref{eq:circu}) we see that the eigenvalues $\lambda_j \ \ (1 \leq j \leq x)$ of $X$ are given by
\begin{align*}
\lambda_j=\sum_{a=1}^{x} \sum_{q=1}^{Q} c_q(a) \omega_j^a=& \sum_{q=1}^{Q} \sum_{a=1}^{x} \sum_{\substack{k=1 \\ \gcd(k,q)=1}}^q \exp \Bigl(2 \pi i a \bigl(\frac{k}{q}+\frac{j}{x} \bigr) \Bigr)\\
=& \sum_{q=1}^{Q}  \sum_{\substack{k=1 \\ \gcd(k,q)=1}}^q \sum_{a=1}^{x} \exp \Bigl(2 \pi i a \bigl(\frac{k}{q}+\frac{j'}{x'} \bigr) \Bigr),
\end{align*}
where $\omega_j=\exp(2 \pi i j/x)$, $j'=j/\gcd(j,x)$, and $x'=x/\gcd(j,x)$. Since it follows from Lemma \ref{lem2} that
\begin{equation*}
\sum_{a=1}^{x} \exp \Bigl(2 \pi i a \bigl(\frac{k}{q}+\frac{j'}{x'} \bigr) \Bigr)= \begin{cases} x , & \text{if $q=x'$ and $k+j' \equiv 0 \Mod{q}$}; \\ 0, & \text{otherwise}, \end{cases}
\end{equation*}
we have
\begin{equation}
\lambda_j=\sum_{q=1}^{Q}  \sum_{\substack{k=1 \\ \gcd(k,q)=1 \\ k+j' \equiv 0 \Mod{q}}}^q  x \delta (q,x') = x \sum_{q=1}^{Q} \delta (q,x') \sum_{\substack{k=1 \\ \gcd(k,x')=1 \\ k \equiv -j' \Mod{q}}}^q 1   = \begin{cases} x , & \text{if $x' \leq Q$}; \\ 0, & \text{otherwise}. \end{cases} \label{eq:rem2-1}
\end{equation}
Therefore Theorem \ref{th2} can be recovered from the following lemma.
\end{remark}

\begin{lemma}
\label{lem9}
Let $Q \in \mathbb{N}$ and $x$ is the least common multiple of $1,2, \cdots, Q$. Then we have
\begin{equation*}
\# \{1 \leq j \leq x ; \ \frac{x}{\gcd(j,x)} \leq Q \}=\Phi(q).
\end{equation*}
\end{lemma}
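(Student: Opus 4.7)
The plan is to classify each $j \in \{1, \ldots, x\}$ by the value $q := x/\gcd(j,x)$, count how many $j$'s produce each fixed $q$, and then sum.

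First I would observe that $q = x/\gcd(j,x)$ is always a (positive) divisor of $x$, since $\gcd(j,x) \mid x$. So the set we want to count decomposes as
\begin{equation*}
\{1 \leq j \leq x : x/\gcd(j,x) \leq Q\} = \bigsqcup_{\substack{q \mid x \\ q \leq Q}} \{1 \leq j \leq x : x/\gcd(j,x) = q\}.
\end{equation*}

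Next, for a fixed divisor $q$ of $x$, I would parametrize the $j$'s with $x/\gcd(j,x) = q$ (equivalently $\gcd(j,x) = x/q$) by writing $j = (x/q)k$. The condition $\gcd((x/q)k, x) = x/q$ is equivalent to $\gcd(k, q) = 1$, and the range $1 \leq j \leq x$ corresponds to $1 \leq k \leq q$. Thus there are exactly $\varphi(q)$ such $j$.

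Finally, using the key feature that $x = \lcm(1,2,\ldots,Q)$, every integer $q$ with $1 \leq q \leq Q$ is a divisor of $x$, so the condition $q \mid x$ in the sum is redundant once we also require $q \leq Q$. Adding everything together therefore yields
\begin{equation*}
\#\{1 \leq j \leq x : x/\gcd(j,x) \leq Q\} = \sum_{q=1}^{Q} \varphi(q) = \Phi(Q),
\end{equation*}
which is the claim. There is no real obstacle here; the only delicate point is verifying the equivalence $\gcd((x/q)k, x) = x/q \Longleftrightarrow \gcd(k,q) = 1$, which is a one-line computation using $x = q \cdot (x/q)$.
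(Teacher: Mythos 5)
Your proof is correct and follows essentially the same route as the paper: decompose the set according to the value $q = x/\gcd(j,x)$, parametrize the $j$ with $\gcd(j,x)=x/q$ as $j=(x/q)k$ with $\gcd(k,q)=1$ to get a count of $\varphi(q)$, and sum over $q \le Q$ using the fact that $x=\lcm(1,\dots,Q)$ makes every such $q$ a divisor of $x$. Your added verification that $\gcd((x/q)k,x)=x/q$ is equivalent to $\gcd(k,q)=1$ is a point the paper passes over silently, but the argument is the same.
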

\begin{proof}
It is easy to see that
\begin{align*}
& \# \{1 \leq j \leq x ; \ \frac{x}{\gcd(j,x)} \leq Q \} \\
=& \sum_{q=1}^Q \# \{1 \leq j \leq x ; \ \frac{x}{\gcd(j,x)} =q \} \\
=& \sum_{q=1}^Q \# \{1 \leq j \leq x ; \ \gcd(j,x)=\frac{x}{q} \} \\
=& \sum_{q=1}^Q \# \{1 \leq j \leq x ; \ j=\frac{x}{q} k \text{ for some }   1 \leq k \leq q \text{ such that } \gcd(k,q)=1 \} \\
=&\sum_{q=1}^Q \# \{1 \leq k \leq q ; \  \gcd(k,q)=1 \}= \sum_{q=1}^Q \varphi(q)=\Phi(Q).
\end{align*}
\end{proof}

Now we have the following corollary by (\ref{eq:circu2}) and (\ref{eq:rem2-1}).
\begin{corollary}
Let $X=(X_{mn})_{m,n=1}^x=(\sum_{q=1}^Q c_q (m-n))_{m,n=1}^x$ where $x$ is the least common multiple of $1,2, \cdots, Q$. Then the eigenvalues $\lambda_j$ and the eigenvectors $v_j \ (1 \leq j \leq x)$ of $X$ are given by the following.

If $x'=x / \gcd(j,x) \leq Q$, then
\begin{align*}
& \lambda_j=x, \\
& v_j=(\underbrace{1,\omega_j,\omega_j^2, \cdots, \omega_j^{x'-1}}_{x'},\underbrace{1,\omega_j,\omega_j^2, \cdots, \omega_j^{x'-1}}_{x'}, \cdots,\underbrace{1,\omega_j,\omega_j^2, \cdots, \omega_j^{x'-1}}_{x'}),
\intertext{and if $x'=x / \gcd(j,x) > Q$, then}
& \lambda_j=0, \\
& v_j= (\underbrace{1,\omega_j,\omega_j^2, \cdots, \omega_j^{x'-1}}_{x'},\underbrace{1,\omega_j,\omega_j^2, \cdots, \omega_j^{x'-1}}_{x'}, \cdots,\underbrace{1,\omega_j,\omega_j^2, \cdots, \omega_j^{x'-1}}_{x'}),
\end{align*}
where  $\omega_j=\exp (2 \pi i j / x )$.
\end{corollary}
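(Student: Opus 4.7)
The plan is essentially to read off the corollary by combining two ingredients already established in the paper: the general eigenvector formula (\ref{eq:circu2}) for a circulant matrix, and the eigenvalue computation (\ref{eq:rem2-1}) carried out in the remark immediately preceding the corollary. Since $c_q(m-n) = c_q(q-(m-n)) = c_q((n-m) \bmod q)$ for each $q \mid x$, the matrix $X$ is circulant, so (\ref{eq:circu2}) directly supplies the candidate eigenvectors $v_j = (1, \omega_j, \omega_j^2, \ldots, \omega_j^{x-1})$ with $\omega_j = \exp(2\pi i j/x)$ for $1 \leq j \leq x$, and (\ref{eq:rem2-1}) supplies the corresponding eigenvalues $\lambda_j$.

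First I would invoke (\ref{eq:rem2-1}) verbatim to split into the two cases. When $x' := x/\gcd(j,x) \leq Q$, it gives $\lambda_j = x$; when $x' > Q$, it gives $\lambda_j = 0$. This handles the eigenvalue side of the corollary in both cases.

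Next I would rewrite the standard circulant eigenvector $(1, \omega_j, \ldots, \omega_j^{x-1})$ in the blocked form appearing in the corollary. The key observation is periodicity: writing $j' = j/\gcd(j,x)$, we have
\begin{equation*}
\omega_j^{x'} = \exp\Bigl(\frac{2 \pi i j x'}{x}\Bigr) = \exp(2 \pi i j') = 1,
\end{equation*}
and since $\gcd(j', x') = 1$, the order of $\omega_j$ is exactly $x'$. Therefore the sequence $(\omega_j^0, \omega_j^1, \ldots, \omega_j^{x-1})$ decomposes into $\gcd(j,x) = x/x'$ identical consecutive blocks, each equal to $(1, \omega_j, \omega_j^2, \ldots, \omega_j^{x'-1})$. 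This is precisely the form displayed in the corollary, and the rewriting is valid in both the $x' \leq Q$ and $x' > Q$ cases.

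There is really no substantive obstacle here: the corollary is a direct translation of the remark's results into the blocked-vector notation, and everything nontrivial (circulant structure, the explicit evaluation of $\lambda_j$ via Lemma \ref{lem2}) has already been done. The only point worth stating carefully is the order calculation $\mathrm{ord}(\omega_j) = x'$ used to justify the block decomposition of the eigenvector, which follows at once from $\gcd(j', x') = 1$.
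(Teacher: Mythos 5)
Your proposal is correct and follows exactly the paper's route: the paper derives this corollary directly from the circulant eigenvector formula (\ref{eq:circu2}) together with the eigenvalue computation (\ref{eq:rem2-1}) in the preceding remark. Your explicit justification of the block decomposition via $\mathrm{ord}(\omega_j)=x'$ is a detail the paper leaves implicit, but it is the same argument.
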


\subsection{The case of Kloosterman sums}

Let $S(m,n;q)$ be the Kloosterman sums.
We first consider the following $q \times q$ matrix 
\begin{equation*}
B_q=(S(m,n;q))_{m,n=1}^q,
\end{equation*}
where $q$ is a fixed positive integer.
We begin with the following lemma.

\begin{lemma}
\label{lem10}
Let $B_q=(S(m,n;q))_{m,n=1}^q$. Then we have
\begin{equation*}
B_q^2=qA_q.
\end{equation*}
\end{lemma}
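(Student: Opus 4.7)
The plan is to identify the $(m,n)$ entry of $B_q^2$ and recognize it as a direct instance of (\ref{eq:lem1-2}) in Lemma \ref{lem1}. By definition of matrix multiplication, the $(m,n)$ entry of $B_q^2$ is
\begin{equation*}
(B_q^2)_{mn} = \sum_{a=1}^q S(m,a;q) \, S(a,n;q).
\end{equation*}

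I would then apply (\ref{eq:lem1-2}) with the choice $x = q$ and $r = q$; the divisibility hypotheses $q \mid x$ and $r \mid x$ hold trivially. The ``otherwise'' case of (\ref{eq:lem1-2}) is vacuous since we have chosen $q = r$, so the identity yields directly
\begin{equation*}
\sum_{a=1}^q S(m,a;q) \, S(a,n;q) = q\, c_q(m-n),
\end{equation*}
which is exactly the $(m,n)$ entry of $qA_q$. Since this equality holds entrywise for all $1 \leq m,n \leq q$, we conclude $B_q^2 = qA_q$.

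There is no real obstacle here: the lemma is essentially a specialization of the Kloosterman-sum orthogonality identity already proved in Lemma \ref{lem1}, applied with the modulus equal to the matrix size. The only point worth noting in the write-up is that one must check that the range of summation $a = 1,\dots,q$ in the definition of matrix multiplication matches the range $a = 1,\dots,x$ appearing in (\ref{eq:lem1-2}), which is precisely why taking $x = q$ is the right specialization.
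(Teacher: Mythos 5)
Your proof is correct and follows the same route as the paper: both identify the $(m,n)$ entry of $B_q^2$ as $\sum_{a=1}^q S(m,a;q)S(a,n;q)$ and apply (\ref{eq:lem1-2}) with $x=r=q$. Your added remark about matching the summation ranges is a reasonable clarification but does not change the argument.
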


\begin{proof}
From Lemma \ref{lem1} we see that the $(m,n)$ entry of $B_q^2$ equals
\begin{equation*}
\sum_{a=1}^q S(m,a;q) S(a,n;q)= q c_q (m-n),
\end{equation*}
which is equal to the $(m,n)$ entry of $q A_q$.
\end{proof}

Next we consider the trace of $B_q^j$ where $j \in \mathbb{N}$.

\begin{lemma}
\label{lem11}
Let $B_q=(S(m,n;q))_{m,n=1}^q$. For $ j \in \mathbb{N}$, we have
\begin{align}
 & \mathrm{tr} (B_q^{2j})  =q^{2j} \varphi (q), \label{eq:lem11-1} \\
 & \mathrm{tr} (B_q^{2j-1})  =q^{2j-1} \widetilde{\varphi} (q). \label{eq:lem11-2}
\end{align}
\end{lemma}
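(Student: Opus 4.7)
The plan is to reduce both identities to traces of $A_q$, $B_q$, or the product $B_qA_q$ by exploiting Lemma \ref{lem10}, which tells us $B_q^2=qA_q$.

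For the even exponents I would argue as follows. Iterating Lemma \ref{lem10} and then Lemma \ref{lem5}, I get
\begin{equation*}
B_q^{2j}=(B_q^2)^j=q^j A_q^j=q^j\cdot q^{j-1}A_q=q^{2j-1}A_q,
\end{equation*}
so by Lemma \ref{lem6} (with $j=1$) we have $\mathrm{tr}(B_q^{2j})=q^{2j-1}\mathrm{tr}(A_q)=q^{2j-1}\cdot q\varphi(q)=q^{2j}\varphi(q)$, which is (\ref{eq:lem11-1}).

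For the odd exponents I handle $j=1$ separately and then reduce the case $j\geq 2$ to computing $\mathrm{tr}(B_qA_q)$. For $j=1$, expanding $S(m,m;q)$ from its definition and summing over $m$ gives
\begin{equation*}
\mathrm{tr}(B_q)=\sum_{m=1}^q S(m,m;q)=\sum_{\substack{k=1\\ \gcd(k,q)=1}}^q \sum_{m=1}^q \exp\Bigl(\tfrac{2\pi i m(k+k^*)}{q}\Bigr),
\end{equation*}
and the inner sum is $q$ precisely when $k+k^*\equiv 0\Mod{q}$ and vanishes otherwise, yielding $\mathrm{tr}(B_q)=q\widetilde{\varphi}(q)$, which is (\ref{eq:lem11-2}) at $j=1$. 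For $j\geq 2$, I write $B_q^{2j-1}=B_q\cdot B_q^{2(j-1)}=q^{2j-3}B_qA_q$ using the even-power identity already established. It then remains to show $\mathrm{tr}(B_qA_q)=q^2\widetilde{\varphi}(q)$; but this is exactly Lemma \ref{lem4} applied with $x=q$ and $r=q$, after using the symmetries $S(m,n;q)=S(n,m;q)$ and $c_q(n-m)=c_q(m-n)$ from (\ref{eq-ram-01}) to match the form of the sum there. Multiplying through gives $\mathrm{tr}(B_q^{2j-1})=q^{2j-3}\cdot q^2\widetilde{\varphi}(q)=q^{2j-1}\widetilde{\varphi}(q)$.

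The only real obstacle is recognizing that the odd-exponent trace is controlled by the mixed trace $\mathrm{tr}(B_qA_q)$ rather than by $\mathrm{tr}(A_q)$; once this is noticed, Lemma \ref{lem4} delivers the answer immediately, and the factor $\widetilde{\varphi}(q)$ appears naturally from the condition $k+k^*\equiv 0\Mod{q}$ that already surfaced in the direct computation for $j=1$. All remaining manipulations are routine bookkeeping of powers of $q$.
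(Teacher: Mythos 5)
Your proposal is correct and follows essentially the same route as the paper: even powers are reduced to $A_q$ via $B_q^2=qA_q$ and Lemmas \ref{lem5}--\ref{lem6}, the case $j=1$ of the odd-power formula is done by the same direct computation, and the odd powers for $j\geq 2$ are reduced to the mixed trace $\mathrm{tr}(A_qB_q)=q^2\widetilde{\varphi}(q)$ supplied by Lemma \ref{lem4} with $x=r=q$. The only cosmetic difference is that you collapse $B_q^{2j}$ all the way to $q^{2j-1}A_q$ before taking the trace, while the paper applies $\mathrm{tr}(A_q^j)=q^j\varphi(q)$ directly; these are equivalent.
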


\begin{proof}
We first prove $\mathrm{tr} (B_q)  =q \widetilde{\varphi} (q)$. It follows that
\begin{align}
\mathrm{tr} (B_q) = & \sum_{m=1}^q S(m,m;q)= \sum_{m=1}^q \sum_{\substack{k=1 \\ \gcd(k,q)=1}}^q \exp (\frac{2 \pi i }{q} (mk+mk^*))  \nonumber \\
= & \sum_{\substack{k=1 \\ \gcd(k,q)=1}}^q \sum_{m=1}^q  \exp (\frac{2 \pi i }{q} (k+k^*)m) . \label{eq:lem11-3}
\end{align}

Since it follows that
\begin{equation*}
\sum_{m=1}^q  \exp (\frac{2 \pi i }{q} (k+k^*)m) =\begin{cases} q, & \text{if $k+k^* \equiv 0 \Mod q $}; \\ 0, & \text{otherwise}, \end{cases}
\end{equation*}
we see that $(\ref{eq:lem11-3})$ is equal to
\begin{align*}
 &q \sum_{\substack{k=1 \\ \gcd(k,q)=1 \\ k+k^* \equiv 0 \Mod q}}^q 1 \\
=& q \ \# \{ 1 \leq k \leq q; \gcd(k,q)=1 , \ \ k+k^* \equiv 0 \Mod q \}=q \widetilde{\varphi } (q).
\end{align*}

Therefore $\mathrm{tr} (B_q)  =q \widetilde{\varphi} (q)$ holds. \\

Next we prove (\ref{eq:lem11-1}). By Lemma \ref{lem6} and Lemma \ref{lem10} we have
\begin{equation*}
\mathrm{tr} (B_q^{2j}) =\mathrm{tr} ((B_q^{2})^j) = \mathrm{tr} ((qA_q)^j)= q^j \mathrm{tr} ( A_q^j)= q^j q^j \varphi (q)=q^{2j} \varphi(q).
\end{equation*}

Therefore (\ref{eq:lem11-1}) holds. 

Next we prove (\ref{eq:lem11-2}) for $j \geq 2$.
We have by Lemma \ref{lem5} and Lemma \ref{lem10}
\begin{align}
\mathrm{tr} (B_q^{2j-1}) & = \mathrm{tr} (B_q^{2(j-1)} B_q)=\mathrm{tr} ((q A_q)^{j-1} B_q) \nonumber \\
&= \mathrm{tr} (q^{j-1} q^{j-2}A_q  B_q)= q^{2j-3} \mathrm{tr} (A_q  B_q). \label{eq:lem11-4}
\end{align}

Since $\mathrm{tr} (A_q  B_q)=q^2 \widetilde{\varphi} (q)$ holds by Lemma \ref{lem4}, we see that (\ref{eq:lem11-4}) is equal to
\begin{equation*}
q^{2j-3} q^2 \widetilde{\varphi} (q)=q^{2j-1}  \widetilde{\varphi} (q).
\end{equation*}
This completes the proof of Lemma \ref{lem11}.
\end{proof}

Now we can prove the following theorem.

\begin{theorem}
\label{th3} 
Let $B_q=(S(m,n;q))_{m,n=1}^q$. Then the characteristic polynomial of $B_q$ is
\begin{equation*}
\det(\lambda E_q-B_q)=\lambda^{q-\varphi(q)} (\lambda -q)^{\frac{\varphi(q)+\widetilde{\varphi} (q) }{2} } (\lambda +q)^{\frac{\varphi(q)-\widetilde{\varphi} (q) }{2}}.
\end{equation*}

Especially, the matrix $B_q$ has eigenvalues $0,q,-q $ with multiplicity $q-\varphi(q), \frac{\varphi(q)+\widetilde{\varphi} (q) }{2},\frac{\varphi(q)-\widetilde{\varphi} (q) }{2}$, respectively.
\end{theorem}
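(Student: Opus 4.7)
The plan is to follow the strategy of Theorem \ref{th1}, exploiting the identity $\det(\exp(M)) = \exp(\mathrm{tr}(M))$ together with the trace formulas for $B_q^j$ supplied by Lemma \ref{lem11}. For $\lambda > q$ I would write
\begin{equation*}
\det(\lambda E_q - B_q) = \lambda^q \exp\bigl(\mathrm{tr}(\log(E_q - \tfrac{1}{\lambda} B_q))\bigr),
\end{equation*}
expand the logarithm as a power series in $B_q/\lambda$, and take the trace term by term. Unlike the Ramanujan-sum case, $\mathrm{tr}(B_q^j)$ depends on the parity of $j$, so the resulting series must be split into its odd and even parts.

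Substituting $\mathrm{tr}(B_q^{2k}) = q^{2k} \varphi(q)$ and $\mathrm{tr}(B_q^{2k-1}) = q^{2k-1} \widetilde{\varphi}(q)$ from Lemma \ref{lem11}, the trace becomes
\begin{equation*}
\mathrm{tr}(\log(E_q - \tfrac{1}{\lambda} B_q)) = -\varphi(q) \sum_{k=1}^\infty \frac{1}{2k} \Bigl(\frac{q}{\lambda}\Bigr)^{2k} - \widetilde{\varphi}(q) \sum_{k=1}^\infty \frac{1}{2k-1} \Bigl(\frac{q}{\lambda}\Bigr)^{2k-1}.
\end{equation*}
I would then recognize the first series as $-\frac{1}{2}\log(1 - q^2/\lambda^2)$ and the second as $\frac{1}{2}\log\bigl(\frac{\lambda+q}{\lambda-q}\bigr)$. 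After exponentiating, multiplying by $\lambda^q$, and collecting the factors of $\lambda-q$, $\lambda+q$, and $\lambda$, the expression should collapse to the claimed characteristic polynomial. The identity, established for $\lambda > q$, then extends to all $\lambda \in \mathbb{C}$ because both sides are polynomials in $\lambda$.

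Two bookkeeping points warrant care. First, the exponents $(\varphi(q) \pm \widetilde{\varphi}(q))/2$ must be shown to be non-negative integers; this follows from Lemma \ref{lem3} by a short case check on the prime-power factorization of $q$, which yields $\widetilde{\varphi}(q) \leq \varphi(q)$ and $\widetilde{\varphi}(q) \equiv \varphi(q) \Mod{2}$. Second, the principal source of difficulty is the sign and parity bookkeeping when combining the two logarithmic series; I expect this to be the main obstacle, though it is purely mechanical once the arctanh and $\log(1-z^2)$ identities are invoked.
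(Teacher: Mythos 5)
Your proposal is correct and follows essentially the same route as the paper's own proof: the $\det(\exp(M))=\exp(\mathrm{tr}(M))$ identity for $\lambda>q$, the odd/even split of the logarithmic series using the trace formulas of Lemma \ref{lem11}, the $\mathrm{arctanh}$ and $\log(1-z^2)$ summations, and extension to all $\lambda$ by polynomial identity. The only addition is your integrality check on the exponents $(\varphi(q)\pm\widetilde{\varphi}(q))/2$, which the paper leaves implicit.
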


\begin{proof} 
The proof proceeds along the same lines as the proof of Theorem \ref{th1}.
We first suppose $\lambda > q$. Since $\det (\exp (M))=\exp( \mathrm{tr} (M))$ holds for any square matrix $M$, we have
\begin{equation*}
\det(\lambda E_q-B_q)= \lambda^q \det( E_q-\frac{1}{\lambda} B_q)= \lambda^q \exp( \mathrm{tr} (\log (E_q-\frac{1}{\lambda} B_q))) .
\end{equation*}
Since
\begin{equation*}
\log (E_q-\frac{1}{\lambda} B_q )= \sum_{j=1}^\infty \frac{(-1)^{j-1}}{j} (-\frac{1}{\lambda} B_q)^j= \sum_{j=1}^\infty \frac{(-1)^{j-1}}{j} (-\frac{1}{\lambda})^j B_q^j
\end{equation*}
holds, we have by Lemma \ref{lem11}
\begin{align}
\mathrm{tr} (\log (E_q-\frac{1}{\lambda} B_q ))=& \sum_{j=1}^\infty \frac{(-1)^{j-1}}{j} (-\frac{1}{\lambda})^j \mathrm{tr} (B_q^j) \nonumber \\
=& \sum_{j=1}^\infty \frac{(-1)^{2j-2}}{2j-1} (-\frac{1}{\lambda})^{2j-1} \mathrm{tr} (B_q^{2j-1}) + \sum_{j=1}^\infty \frac{(-1)^{2j-1}}{2j} (-\frac{1}{\lambda})^{2j} \mathrm{tr} (B_q^{2j}) \nonumber \\
=&  \sum_{j=1}^\infty \frac{1}{2j-1} (-\frac{1}{\lambda})^{2j-1} q^{2j-1} \widetilde{\varphi} (q) + \sum_{j=1}^\infty \frac{-1}{2j} (-\frac{1}{\lambda})^{2j} q^{2j} \varphi(q) \nonumber \\
=& \widetilde{\varphi} (q) \sum_{j=1}^\infty \frac{1}{2j-1} (-\frac{q}{\lambda})^{2j-1}   +\varphi(q) \sum_{j=1}^\infty \frac{-1}{2j} (-\frac{q}{\lambda})^{2j}  \label{eq:th3-01} .
\end{align}

Since it follows that
\begin{align*}
& \sum_{j=1}^\infty \frac{1}{2j-1} x^{2j-1}  = \frac{1}{2} \log \frac{1+x}{1-x}, \\
& \sum_{j=1}^\infty \frac{1}{2j} x^{2j}  = -\frac{1}{2} \log (1-x^2),
\end{align*}
we see that (\ref{eq:th3-01}) is equal to
\begin{equation*}
\frac{\widetilde{\varphi} (q)}{2} \log \frac{1-q/\lambda}{1+q/\lambda}+\frac{\varphi (q)}{2} \log (1-\frac{q^2}{\lambda^2}).
\end{equation*}

Therefore we obtain
\begin{align}
\det(\lambda E_q-B_q)=& \lambda^q \exp \Bigl(\frac{\widetilde{\varphi}(q)}{2} \log \frac{\lambda-q}{\lambda+q}+\frac{\varphi (q)}{2} \log (1-\frac{q^2}{\lambda^2}) \Bigr) \nonumber \\
 =&  \lambda^q \Bigl( \frac{\lambda-q}{\lambda +q} \Bigr)^{\frac{\widetilde{\varphi} (q)}{2}} \Bigl( 1-\frac{q^2}{\lambda^2} \Bigr)^{\frac{\varphi (q)}{2}}  \nonumber \\
 =& \lambda^{q-\varphi(q)} (\lambda -q)^{\frac{\varphi(q)+\widetilde{\varphi} (q) }{2} } (\lambda +q)^{\frac{\varphi(q)-\widetilde{\varphi} (q) }{2} }. \label{eq:th3-1}
\end{align}

Since (\ref{eq:th3-1}) holds for any $\lambda > q$ and both sides of (\ref{eq:th3-1}) are polynomials of $\lambda$, (\ref{eq:th3-1}) holds for any $\lambda$.
This completes the proof of Theorem \ref{th3}.
\end{proof}

Although $B_q$ is not a circulant matrix, we can obtain the eigenvectors of $B_q$ by mimicking the method which was used in the case of $A_q$. Let 
\begin{equation*}
u_j=   (\omega_j,\omega_j^2, \cdots, \omega_j^{q}), 
\end{equation*}
where $\omega_j=\exp (2 \pi i j / q )$ and $1 \leq j \leq q$. We have the following corollary concerning the eigenvectors of $B_q$.

\begin{corollary}
\label{cor3}
Let $B_q=(S(m,n;q))_{m,n=1}^q$. Then the following holds.

If $\gcd(j,q)>1$, then
\begin{align*}
B_q u_j=0.
\end{align*}

If $\gcd(j,q)=1$ and $j+j^* \equiv 0 \Mod{q}$, then
\begin{align*}
B_q u_j=q u_j.
\end{align*}

If $\gcd(j,q)=1$ and $j+j^* \not \equiv 0 \Mod{q}$, then
\begin{align*}
& B_q (u_j+u_{-j^*})=q (u_j+u_{-j^*}), \\
& B_q (u_j-u_{-j^*})=-q (u_j-u_{-j^*}).
\end{align*}
\end{corollary}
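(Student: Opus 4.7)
The plan is to compute $B_q u_j$ directly by expanding the definition of the Kloosterman sum and exchanging the order of summation so that the inner sum becomes a geometric sum handled by orthogonality. I will first show that $B_q u_j = q u_{-j^*}$ whenever $\gcd(j,q)=1$, interpreting the index $-j^*$ modulo $q$, and that $B_q u_j = 0$ whenever $\gcd(j,q) > 1$. The three claimed cases then fall out by examining when $-j^* \equiv j \pmod{q}$.

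More precisely, for each $m$ with $1 \le m \le q$ I would write
\begin{equation*}
(B_q u_j)_m = \sum_{n=1}^q S(m,n;q)\,\omega_j^n = \sum_{\substack{k=1 \\ \gcd(k,q)=1}}^q \exp\!\Bigl(\frac{2\pi i mk}{q}\Bigr) \sum_{n=1}^q \exp\!\Bigl(\frac{2\pi i(k^*+j)n}{q}\Bigr).
\end{equation*}
By the geometric-sum identity used to prove Lemma \ref{lem2}, the inner sum equals $q$ if $k^*+j \equiv 0 \pmod q$ and vanishes otherwise. The congruence $k^* \equiv -j \pmod q$ forces $\gcd(j,q) = 1$ (since then $-kj \equiv 1$), which immediately gives $B_q u_j = 0$ when $\gcd(j,q) > 1$. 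When $\gcd(j,q)=1$, the congruence has the unique solution $k \equiv -j^* \pmod q$, so $(B_q u_j)_m = q\,\omega_{-j^*}^{m}$, i.e., $B_q u_j = q\, u_{-j^*}$.

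It remains to split into the three cases. If $j + j^* \equiv 0 \pmod q$, then $-j^* \equiv j \pmod q$, so $u_{-j^*} = u_j$ and $B_q u_j = q u_j$. If $\gcd(j,q)=1$ but $j + j^* \not\equiv 0 \pmod q$, then applying the same formula to the index $-j^*$ (using $(-j^*)^* \equiv -j$ since $(j^*)^* = j$) yields $B_q u_{-j^*} = q u_{j}$. Combining $B_q u_j = q u_{-j^*}$ with $B_q u_{-j^*} = q u_j$ and taking sum and difference produces the stated eigenvectors $u_j \pm u_{-j^*}$ with eigenvalues $\pm q$.

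The only mildly delicate point, and the one I would make explicit, is the bookkeeping for the involution $j \mapsto -j^*$ on units modulo $q$: verifying that $(-j^*)^* \equiv -j \pmod q$ and that $j$ is a fixed point of this involution exactly when $j^2 \equiv -1 \pmod q$ (equivalently, when $j + j^* \equiv 0 \pmod q$). Once this is nailed down, everything else is a direct application of Lemma \ref{lem2}, and the consistency check $B_q^2 u_j = q^2 u_j$ matches Lemma \ref{lem10} together with $A_q u_j = q u_j$ from the circulant analysis.
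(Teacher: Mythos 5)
Your proposal is correct and follows essentially the same route as the paper: expand $S(m,n;q)$, swap the order of summation, use orthogonality to reduce the inner sum to the single term $k\equiv -j^*\pmod q$, obtain $B_qu_j=qu_{-j^*}$ for $\gcd(j,q)=1$ (and $B_qu_j=0$ otherwise), and then split into cases according to whether $j$ is a fixed point of the involution $j\mapsto -j^*$. The paper's proof is the same computation, with the same case analysis at the end.
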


\begin{proof}
We have for $1 \leq m \leq q$
\begin{align}
(B_q u_j)(m)=&\sum_{n=1}^q S(m,n;q) u_j(n) \nonumber \\
=& \sum_{n=1}^q \sum_{\substack{k=1 \\ \gcd(k,q)=1}}^q \exp \Bigl(\frac{2 \pi i }{q} (mk+nk^* ) \Bigr) \exp \Bigl(\frac{2 \pi i jn}{q} \Bigr) \nonumber \\
=& \sum_{\substack{k=1 \\ \gcd(k,q)=1}}^q \exp \Bigl(\frac{2 \pi i mk}{q} \Bigr) \sum_{n=1}^q \exp \Bigl(\frac{2 \pi i n}{q} (k^*+j) \Bigr). \label{eq:cor3-1}
\end{align}

Noting that
\begin{equation*}
\sum_{n=1}^q \exp \Bigl(\frac{2 \pi i n}{q} (k^*+j) \Bigr)=\begin{cases} q, & \text{if $k^*+j \equiv 0 \Mod{q}$} ; \\ 0, & \text{otherwise}, \end{cases}
\end{equation*}
we see that (\ref{eq:cor3-1}) is equal to $0$ in the case $\gcd(j,q)>1$ since there exist no $k$ such that $\gcd(k,q)=1$ and $k^*+j \equiv 0 \Mod{q}$ in this case. If $\gcd(j,q)=1$, then (\ref{eq:cor3-1}) is equal to
\begin{align*}
& q \sum_{\substack{k=1 \\ \gcd(k,q)=1 \\ k^*+j \equiv 0 \Mod{q}}}^q \exp \Bigl(\frac{2 \pi i mk}{q} \Bigr) =q \sum_{\substack{k=1 \\ \gcd(k,q)=1 \\ k \equiv -j^* \Mod{q}}}^q \exp \Bigl(\frac{2 \pi i mk}{q} \Bigr)  \\
= &  q \exp \Bigl(-\frac{2 \pi i m j^*}{q} \Bigr) = \begin{cases} q \exp (2 \pi i mj / q  )  , & \text{if $j+j^* \equiv 0 \Mod{q}$} ; \\ q \exp (-2 \pi i m j^* / q ) , & \text{if $j+j^* \not \equiv 0 \Mod{q}$}. \end{cases}
\end{align*}

From this we have for $j$ satisfying $\gcd(j,q)=1$
\begin{equation}
(B_q u_j)=\begin{cases} q u_j , & \text{if $j+j^* \equiv 0 \Mod{q}$} ; \\ q u_{-j^*} , & \text{if $j+j^* \not \equiv 0 \Mod{q}$}. \end{cases} \label{eq:cor3-2}
\end{equation}

Thus $B_q u_j =q u_j$ holds if $\gcd(j,q)=1$ and $j+j^* \equiv 0 \Mod{q}$.

Similarly, if $\gcd(j,q)=1$ and $j+j^* \not \equiv 0 \Mod{q}$, then we obtain $ B_q (u_j+u_{-j^*})=q (u_j+u_{-j^*})$ and $B_q (u_j-u_{-j^*})=-q (u_j-u_{-j^*})$  since $B_q u_j= q u_{-j^*}$ and $B_q u_{-j^*}= q u_j$ hold by (\ref{eq:cor3-2}).
This completes the proof of Corollary \ref{cor3}.
\end{proof} 

\begin{remark}
We note that
\begin{align*}
& \# \{ 1 \leq j \leq q; \ (j,q)=1 , \ \ j+j^* \equiv 0 \Mod{q} \}= \widetilde{\varphi} (q), \\
& \# \{ 1 \leq j \leq q; \ (j,q)=1 , \ \ j+j^* \not \equiv 0 \Mod{q} \}= \varphi(q)-\widetilde{\varphi} (q).
\end{align*}

From this we have
\begin{align*}
& \# \{ 1 \leq j \leq q; \ B_q u_j=q u_j \quad \text{or} \quad  B_q (u_j+u_{-j^*})=q (u_j+u_{-j^*}) \}= \frac{\varphi(q)+\widetilde{\varphi} (q)}{2}, \\
& \# \{ 1 \leq j \leq q; \ B_q (u_j-u_{-j^*})=-q (u_j-u_{-j^*}) \}=\frac{\varphi(q)-\widetilde{\varphi} (q)}{2} ,
\end{align*}
which coincide with the multiplicities of non-zero eigenvalues described in Theorem \ref{th3}.
\end{remark}

We give the following example.

\begin{example}
$B_2= \left( \begin{array}{rr} 1 & -1    \\ -1 & 1  \\ \end{array}  \right)$ has eigenvalues $0,2$. \\
$B_3= \left( \begin{array}{rrr} -1 & 2 & -1  \\ 2& -1 &-1  \\ -1 & -1 & 2  \\ \end{array}  \right)$ has eigenvalues $0,3,-3$. \\
$B_4= \left( \begin{array}{rrrr} -2 & 0 & 2 & 0  \\ 0 & 2 & 0 & -2 \\ 2 & 0 & -2 & 0 \\ 0 & -2 & 0 & 2  \\ \end{array}  \right)$ has eigenvalues $0,4,-4$ with multiplicity $2,1,1$, respectively, since $\varphi (4)=2$ and $\widetilde{\varphi} (4)=0$.  \\
$B_5=\left( \begin{array}{rrrrr} (3-\sqrt{5})/2 & -1-\sqrt{5} & -1+\sqrt{5} & (3+\sqrt{5})/2 & -1 \\ -1-\sqrt{5} & (3+\sqrt{5})/2  & (3-\sqrt{5})/2  &  -1+\sqrt{5} & -1 \\ -1+\sqrt{5} & (3-\sqrt{5})/2 & (3+\sqrt{5})/2 & -1-\sqrt{5} & -1  \\ (3+\sqrt{5})/2 & -1+\sqrt{5} & -1-\sqrt{5} & (3-\sqrt{5})/2 & -1  \\ -1 & -1 & -1 & -1 & 4 \end{array}  \right)$ \\
has eigenvalues $0,5,-5$ with multiplicity $1,3,1$, respectively, since $\varphi (5)=4$ and $\widetilde{\varphi} (5)=2$. We note that $\sqrt{5}$ appears in $B_5$ since $\cos \pi/5=(1+\sqrt{5})/4$. 

More generally, if $p$ is an odd prime number satisfying $p \equiv 1 \Mod 4$, then \\
$B_p$ has eigenvalues $0,p,-p$ with multiplicity $1,\frac{p+1}{2},\frac{p-3}{2}$, respectively, since $\varphi (p)=p-1$ and $\widetilde{\varphi} (p)=2$.

If $p$ is an odd prime number satisfying $p \equiv -1 \Mod 4$, then \\
$B_p$ has eigenvalues $0,p,-p$ with multiplicity $1,\frac{p-1}{2},\frac{p-1}{2}$, respectively, since $\varphi (p)=p-1$ and $\widetilde{\varphi} (p)=0$.

We remark that by Theorem \ref{th1}, if $p$ is any prime number, then $A_p$ has eigenvalues $0,p$ with multiplicity $1,p-1$, respectively, since $\varphi(p)=p-1$.
\end{example}

Next we consider the $ x \times x$ matrix $Y=(Y_{mn})_{m,n=1}^x=(\sum_{q=1}^Q S(m,n:q))_{m,n=1}^x$ where $Q$ is a fixed positive integer and $x$ is the least common multiple of $1,2, \cdots, Q$.
We proceed along similar lines to the case of $X$.

\begin{lemma} 
\label{lem12}
Let $Y=(Y_{mn})_{m,n=1}^x=(\sum_{q=1}^Q S(m,n;q))_{m,n=1}^x$ where $x$ is the least common multiple of $1,2, \cdots, Q$. Then 
\begin{equation*}
 Y^2=xX .
\end{equation*}
\end{lemma}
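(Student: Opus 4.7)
The plan is to proceed in direct analogy with Lemma \ref{lem7}, substituting the Kloosterman identity (\ref{eq:lem1-2}) of Lemma \ref{lem1} for the Ramanujan identity (\ref{eq:lem1-1}). I will compute the $(m,n)$ entry of $Y^2$ by expanding each factor as a sum over $q,r \in \{1,2,\ldots,Q\}$ and exchanging orders of summation.

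First I would write
\begin{align*}
(Y^2)_{mn} = \sum_{a=1}^x Y_{ma} Y_{an}
= \sum_{q=1}^Q \sum_{r=1}^Q \sum_{a=1}^x S(m,a;q)\,S(a,n;r),
\end{align*}
and then observe that because $x = \operatorname{lcm}(1,2,\ldots,Q)$ and $q,r \leq Q$, both divisibility conditions $q \mid x$ and $r \mid x$ are automatically satisfied. This is exactly the hypothesis needed to invoke (\ref{eq:lem1-2}), which gives
\begin{equation*}
\sum_{a=1}^x S(m,a;q)\,S(a,n;r) = x\,\delta(q,r)\,c_q(m-n).
\end{equation*}

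Substituting this back collapses the double sum over $(q,r)$ to a single sum in $q$, yielding
\begin{equation*}
(Y^2)_{mn} = x \sum_{q=1}^Q c_q(m-n) = x\, X_{mn},
\end{equation*}
which is the desired identity $Y^2 = xX$.

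There is no real obstacle here once Lemma \ref{lem1} is in hand; the entire content of the argument is an interchange of summations and a single application of the orthogonality identity (\ref{eq:lem1-2}). The only point that deserves a brief remark in the write-up is the reason the divisibility hypothesis of Lemma \ref{lem1} is satisfied, namely the choice $x = \operatorname{lcm}(1,2,\ldots,Q)$, which is precisely the design that makes the cross terms $q \neq r$ vanish and leaves the diagonal sum equal to $x c_q(m-n)$.
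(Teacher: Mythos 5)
Your proposal is correct and follows exactly the same route as the paper: expand $(Y^2)_{mn}$ as a double sum over $q,r \leq Q$, invoke (\ref{eq:lem1-2}) of Lemma \ref{lem1} (valid since $x=\lcm(1,\ldots,Q)$ guarantees $q\mid x$ and $r\mid x$), and collapse the off-diagonal terms to obtain $x\sum_{q=1}^Q c_q(m-n)=xX_{mn}$. Nothing is missing.
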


\begin{proof}
By Lemma \ref{lem1} we see that the $(m,n)$ component of $Y^2$ equals
\begin{align*}
(Y^2)_{mn}= & \sum_{a=1}^x Y_{m a} Y_{a n} =\sum_{a=1}^x (\sum_{q=1}^Q S(m,a;q)) (\sum_{r=1}^Q S(a,n;r)) \\
 =& \sum_{q=1}^Q \sum_{r=1}^Q \sum_{a=1}^x S(m,a;q) S(a,n;r) = \sum_{q=1}^Q \sum_{r=1}^Q x \delta (q,r)  c_q (m-n) \\
= & x \sum_{q=1}^Q  c_q(m-n)=x X_{mn},
\end{align*}
which is, of course, equal to the $(m,n)$ component of $xX$.
\end{proof}

Next we consider the trace of $Y^j$ where $j \in \mathbb{N}$. 
We set $\widetilde{\Phi}(Q)=\sum_{q=1}^Q \widetilde{\varphi}(q)$.

\begin{lemma}
\label{lem13}
Let $Y=(Y_{mn})_{m,n=1}^x=(\sum_{q=1}^Q S(m,n;q))_{m,n=1}^x$ where $x$ is the least common multiple of $1,2, \cdots, Q$. 
For $j \in \mathbb{N}$, we have
\begin{align}
& \mathrm{tr} (Y^{2j})  =x^{2j} \Phi (Q), \label{eq:lem13-1} \\
& \mathrm{tr} (Y^{2j-1}) =x^{2j-1} \widetilde{\Phi} (Q). \label{eq:lem13-2}
\end{align}
\end{lemma}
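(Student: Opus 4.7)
The plan is to mirror the proof of Lemma \ref{lem11}, using $Y^2 = xX$ from Lemma \ref{lem12} to reduce even powers of $Y$ to powers of $X$ (which are controlled by Lemma \ref{lem8}), and to reduce odd powers $Y^{2j-1}$ with $j \geq 2$ to a single scalar multiple of $XY$, whose trace can be evaluated via Lemma \ref{lem4}.

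For the even identity \eqref{eq:lem13-1}, I would simply write $Y^{2j} = (Y^2)^j = (xX)^j = x^j X^j$ and invoke Lemma \ref{lem8} to obtain
\begin{equation*}
\mathrm{tr}(Y^{2j}) = x^j \mathrm{tr}(X^j) = x^j \cdot x^j \Phi(Q) = x^{2j}\Phi(Q).
\end{equation*}

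For the odd identity \eqref{eq:lem13-2}, I would first verify the $j=1$ base case directly: since $S(m,m;q)$ is $q$-periodic in $m$ and $q \mid x$, one has $\sum_{m=1}^x S(m,m;q) = (x/q)\sum_{m=1}^q S(m,m;q) = x\widetilde{\varphi}(q)$, where the inner sum equals $q\widetilde{\varphi}(q)$ by the computation already carried out inside the proof of Lemma \ref{lem11}; summing over $q$ gives $\mathrm{tr}(Y) = x\widetilde{\Phi}(Q)$. For $j \geq 2$, Lemmas \ref{lem12} and \ref{lem7} combine to give
\begin{equation*}
Y^{2j-1} = (Y^2)^{j-1} Y = (xX)^{j-1} Y = x^{j-1} X^{j-1} Y = x^{2j-3} XY,
\end{equation*}
where the final equality uses $X^{j-1} = x^{j-2} X$ (which holds for $j \geq 3$ by Lemma \ref{lem7} and reduces to $X = x^0 X$ for $j=2$). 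It then remains to show $\mathrm{tr}(XY) = x^2 \widetilde{\Phi}(Q)$. Expanding the trace,
\begin{equation*}
\mathrm{tr}(XY) = \sum_{m=1}^x \sum_{a=1}^x \sum_{q=1}^Q \sum_{r=1}^Q c_q(m-a) S(a,m;r),
\end{equation*}
and Lemma \ref{lem4} immediately collapses the inner sum to $x^2 \widetilde{\varphi}(q) \delta(q,r)$, so $\mathrm{tr}(XY) = \sum_{q=1}^Q x^2 \widetilde{\varphi}(q) = x^2 \widetilde{\Phi}(Q)$. Hence $\mathrm{tr}(Y^{2j-1}) = x^{2j-3} \cdot x^2 \widetilde{\Phi}(Q) = x^{2j-1} \widetilde{\Phi}(Q)$.

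The main obstacle is not really any single difficult step but the orchestration of the earlier lemmas: Lemma \ref{lem12} to eliminate $Y^2$, Lemma \ref{lem7} to eliminate $X^{j-1}$, and Lemma \ref{lem4} to extract the factor $\widetilde{\Phi}(Q)$ from $\mathrm{tr}(XY)$. The only genuinely new verification is the base case $\mathrm{tr}(Y) = x\widetilde{\Phi}(Q)$, which is the direct analogue of $\mathrm{tr}(B_q) = q\widetilde{\varphi}(q)$ from Lemma \ref{lem11} and requires only the $q$-periodicity of $S(\cdot,\cdot;q)$ together with $q \mid x$; no essentially new difficulty arises beyond what was already handled there.
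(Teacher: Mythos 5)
Your proposal is correct and follows essentially the same route as the paper: even powers via $Y^2=xX$ and Lemma \ref{lem8}, odd powers via reduction to $x^{2j-3}\mathrm{tr}(XY)$ and Lemma \ref{lem4}, plus a separate base case for $\mathrm{tr}(Y)$. The only cosmetic difference is that you obtain $\mathrm{tr}(Y)=x\widetilde{\Phi}(Q)$ by invoking the $q$-periodicity of $S(m,m;q)$ to reuse the $\mathrm{tr}(B_q)$ computation, whereas the paper redoes the exponential-sum evaluation over $1\le m\le x$ directly; both are valid.
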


\begin{proof}
We first prove $\mathrm{tr} (Y) =x \widetilde{\Phi} (Q)$. Since
\begin{align}
\mathrm{tr} (Y) = & \sum_{m=1}^x \sum_{q=1}^Q S(m,m;q) = \sum_{m=1}^x \sum_{q=1}^Q \sum_{\substack{k=1 \\ \gcd(k,q)=1}}^q \exp (\frac{2 \pi i }{q} (mk+mk^*)) \nonumber \\
= & \sum_{q=1}^Q \sum_{\substack{k=1 \\ \gcd(k,q)=1}}^q \sum_{m=1}^x  \exp (\frac{2 \pi i }{q} (k+k^*)m) , \label{eq:lem13-3}
\end{align}
and
\begin{equation*}
\sum_{m=1}^x  \exp (\frac{2 \pi i }{q} (k+k^*)m) =\begin{cases} x, & \text{if $k+k^* \equiv 0 \Mod q $}; \\ 0, & \text{otherwise} \end{cases}
\end{equation*}
hold, we see that $(\ref{eq:lem13-3})$ is equal to
\begin{align*}
& x \sum_{q=1}^Q \sum_{\substack{k=1 \\ \gcd(k,q)=1 \\ k+k^* \equiv 0 \Mod q}}^q 1 \\
=& x \sum_{q=1}^Q \ \# \{ 1 \leq k \leq q; \gcd(k,q)=1 , \ \ k+k^* \equiv 0 \Mod q \} \\
=& x \sum_{q=1}^Q \widetilde{\varphi} (q)= x \widetilde{\Phi } (Q).
\end{align*}

Therefore $\mathrm{tr} (Y)  =x \widetilde{\Phi} (Q)$ holds. \\

Next we prove (\ref{eq:lem13-1}). By Lemma \ref{lem8} and Lemma \ref{lem12} we have
\begin{equation*}
\mathrm{tr} (Y^{2j}) =\mathrm{tr} ((Y^{2})^j) = \mathrm{tr} ((xX)^j)= x^j \mathrm{tr} ( X^j)= x^j x^j \Phi (Q)=x^{2j} \Phi(Q).
\end{equation*}
Therefore (\ref{eq:lem13-1}) holds. 

Next we prove (\ref{eq:lem13-2}) for $j \geq 2$.
We have by Lemma \ref{lem12}
\begin{align*}
\mathrm{tr} (Y^{2j-1}) & = \mathrm{tr} (Y^{2(j-1)} Y)=\mathrm{tr} ((xX)^{j-1} Y)\\
&= \mathrm{tr} (x^{j-1} x^{j-2} X  Y)= x^{2j-3} \mathrm{tr} (XY).
\end{align*}

Since 
\begin{align*}
\mathrm{tr} (XY)=& \sum_{m=1}^x \sum_{q=1}^Q \sum_{r=1}^Q \sum_{a=1}^x  c_q(m-a) S(a,m;r) \\
=& \sum_{q=1}^Q \sum_{r=1}^Q \sum_{m=1}^x \sum_{a=1}^x c_q(m-a) S(a,m;r) \\
=& \sum_{q=1}^Q \sum_{r=1}^Q \delta (q,r) x^2 \widetilde{\varphi}(q)= x^2 \sum_{q=1}^q \widetilde{\varphi}(q)=x^2 \widetilde{\Phi} (Q)
\end{align*}
holds by Lemma \ref{lem4}, we have
\begin{equation*}
\mathrm{tr} (Y^{2j-1}) =x^{2j-3} x^2 \widetilde{\Phi} (Q)=x^{2j-1}  \widetilde{\Phi} (Q).
\end{equation*}
This completes the proof of Lemma \ref{lem13}.
\end{proof}

Let $E_x$ be the $x \times x$ identity matrix where $x$ is a positive integer.
We prove the following theorem.

\begin{theorem} 
\label{th4}
Let $Y=(Y_{mn})_{m,n=1}^x=(\sum_{q=1}^Q S(m,n;q))_{m,n=1}^x$ where $x$ is the least common multiple of $1,2, \cdots, Q$. Then the characteristic polynomial of $Y$ is
\begin{equation*}
\det(\lambda E_x-Y)=\lambda^{x-\Phi(Q)} (\lambda -x)^{\frac{\Phi(Q)+\widetilde{\Phi}(Q)}{2}} (\lambda +x)^{\frac{\Phi(Q)-\widetilde{\Phi}(Q)}{2}}.
\end{equation*}

Especially, the matrix $Y$ has eigenvalues $0,x,-x $ with multiplicity  $x-\Phi(Q), \frac{\Phi(Q)+\widetilde{\Phi}(Q)}{2}, \frac{\Phi(Q)-\widetilde{\Phi}(Q)}{2}$,  respectively.
\end{theorem}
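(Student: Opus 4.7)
The plan is to follow the same template that produced Theorems \ref{th1}, \ref{th2}, and \ref{th3}: compute the characteristic polynomial via the identity $\det(\exp M) = \exp(\operatorname{tr} M)$, applied to $\log(E_x - \lambda^{-1} Y)$. For $\lambda > x$ the matrix $\lambda^{-1} Y$ has spectral radius strictly less than $1$ (this is guaranteed once we know the eigenvalues lie in $\{0,x,-x\}$, and can be checked a priori for $\lambda$ large enough to make the logarithmic series converge), so we may write
\begin{equation*}
\det(\lambda E_x - Y) = \lambda^x \exp\!\Bigl(\operatorname{tr} \log\bigl(E_x - \tfrac{1}{\lambda} Y\bigr)\Bigr) = \lambda^x \exp\!\Bigl(\sum_{j=1}^\infty \frac{(-1)^{j-1}}{j}\bigl(-\tfrac{1}{\lambda}\bigr)^j \operatorname{tr}(Y^j)\Bigr).
\end{equation*}

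Next I would split the exponent into odd and even parts and insert the trace formulas from Lemma \ref{lem13}. The odd part becomes
\begin{equation*}
\sum_{j=1}^\infty \frac{1}{2j-1}\bigl(-\tfrac{x}{\lambda}\bigr)^{2j-1} \widetilde{\Phi}(Q) = \frac{\widetilde{\Phi}(Q)}{2}\log\frac{1-x/\lambda}{1+x/\lambda},
\end{equation*}
using $\sum_{j\ge 1} u^{2j-1}/(2j-1) = \tfrac{1}{2}\log\frac{1+u}{1-u}$, while the even part becomes
\begin{equation*}
-\sum_{j=1}^\infty \frac{1}{2j}\bigl(-\tfrac{x}{\lambda}\bigr)^{2j} \Phi(Q) = \frac{\Phi(Q)}{2}\log\!\Bigl(1-\tfrac{x^2}{\lambda^2}\Bigr),
\end{equation*}
using $\sum_{j\ge 1} u^{2j}/(2j) = -\tfrac{1}{2}\log(1-u^2)$. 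These are exactly the two series that were evaluated in the proof of Theorem \ref{th3}, just with $q$, $\varphi(q)$, $\widetilde{\varphi}(q)$ replaced by $x$, $\Phi(Q)$, $\widetilde{\Phi}(Q)$.

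Exponentiating and multiplying by $\lambda^x$ then collapses everything algebraically:
\begin{equation*}
\det(\lambda E_x - Y) = \lambda^x \Bigl(\tfrac{\lambda-x}{\lambda+x}\Bigr)^{\widetilde{\Phi}(Q)/2}\Bigl(1 - \tfrac{x^2}{\lambda^2}\Bigr)^{\Phi(Q)/2} = \lambda^{x-\Phi(Q)}(\lambda-x)^{(\Phi(Q)+\widetilde{\Phi}(Q))/2}(\lambda+x)^{(\Phi(Q)-\widetilde{\Phi}(Q))/2}.
\end{equation*}
Since both sides are polynomials in $\lambda$ and agree on the half-line $\lambda > x$, they agree identically, which yields the stated characteristic polynomial and hence the eigenvalue multiplicities.

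There is no serious obstacle: the work is entirely parallel to Theorem \ref{th3}, and the only nontrivial input is Lemma \ref{lem13}, which is already established. The only small item to verify carefully is that $(\Phi(Q) \pm \widetilde{\Phi}(Q))/2$ are nonnegative integers; this follows because $\widetilde{\Phi}(Q) = \sum_{q \leq Q} \widetilde{\varphi}(q) \leq \sum_{q \leq Q}\varphi(q) = \Phi(Q)$, and parity agrees since the pairing $k \leftrightarrow -k^{*}$ used in Corollary \ref{cor3} shows $\varphi(q) - \widetilde{\varphi}(q)$ is even for every $q$, so $\Phi(Q) - \widetilde{\Phi}(Q)$ is even as well.
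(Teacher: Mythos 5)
Your proposal is correct and follows essentially the same route as the paper: the $\det(\exp M)=\exp(\operatorname{tr}M)$ identity applied to $\log(E_x-\lambda^{-1}Y)$ for $\lambda>x$, the odd/even split of the trace series via Lemma \ref{lem13}, and the polynomial-identity argument to extend to all $\lambda$. Your added remark that $(\Phi(Q)\pm\widetilde{\Phi}(Q))/2$ are nonnegative integers is a small bonus the paper leaves implicit, but otherwise the arguments coincide.
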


\begin{proof}
The proof proceeds along the same lines as the proof of Theorem \ref{th3}.
We first suppose $\lambda > x$. Since $\det (\exp (M))=\exp( \mathrm{tr} (M))$ holds for any square matrix $M$, we have
\begin{equation*}
\det(\lambda E_x-Y)= \lambda^x \det( E_x-\frac{1}{\lambda} Y)= \lambda^x \exp( \mathrm{tr} (\log (E_x-\frac{1}{\lambda} Y))).
\end{equation*}

Since
\begin{equation*}
\log (E_x-\frac{1}{\lambda} Y )= \sum_{j=1}^\infty \frac{(-1)^{j-1}}{j} (-\frac{1}{\lambda} Y)^j= \sum_{j=1}^\infty \frac{(-1)^{j-1}}{j} (-\frac{1}{\lambda})^j Y^j
\end{equation*}
holds, we have by Lemma \ref{lem13}
\begin{align*}
\mathrm{tr} (\log (E_x-\frac{1}{\lambda} Y ))=& \sum_{j=1}^\infty \frac{(-1)^{j-1}}{j} (-\frac{1}{\lambda})^j \mathrm{tr} (Y^j) \\
=& \sum_{j=1}^\infty \frac{(-1)^{2j-2}}{2j-1} (-\frac{1}{\lambda})^{2j-1} \mathrm{tr} (Y^{2j-1})+\sum_{j=1}^\infty \frac{(-1)^{2j-1}}{2j} (-\frac{1}{\lambda})^{2j} \mathrm{tr} (Y^{2j}) \\
=& \widetilde{\Phi} (Q) \sum_{j=1}^\infty \frac{1}{2j-1} (-\frac{1}{\lambda})^{2j-1} x^{2j-1}  +\Phi(Q) \sum_{j=1}^\infty \frac{-1}{2j} (-\frac{1}{\lambda})^{2j} x^{2j} \\
=& \frac{\widetilde{\Phi} (Q)}{2} \log \frac{1-x/\lambda}{1+x/\lambda}+\frac{\Phi (Q)}{2} \log (1-\frac{x^2}{\lambda^2}).
\end{align*}

Therefore we have
\begin{align}
\det(\lambda E_x-Y)=& \lambda^x \exp \Bigl(\frac{\widetilde{\Phi} (Q)}{2} \log \frac{\lambda-x}{\lambda+x}+\frac{\Phi (Q)}{2} \log (1-\frac{x^2}{\lambda^2}) \Bigr) \nonumber \\
 =&  \lambda^x \Bigl( \frac{\lambda-x}{\lambda +x} \Bigr)^{\frac{\widetilde{\Phi} (Q)}{2}} \Bigl( 1-\frac{x^2}{\lambda^2} \Bigr)^{\frac{\Phi (Q)}{2}}  \nonumber \\
 =& \lambda^{x-\Phi(Q)} (\lambda -x)^{\frac{\Phi(Q)+\widetilde{\Phi} (Q) }{2} } (\lambda -x)^{\frac{\Phi(Q)-\widetilde{\Phi} (Q) }{2} }. \label{eq:th4-1}
\end{align}

Since (\ref{eq:th4-1}) holds for any $\lambda > x$ and both sides of (\ref{eq:th4-1}) are polynomials of $\lambda$, (\ref{eq:th4-1}) holds for any $\lambda$.
This completes the proof of Theorem \ref{th4}.
\end{proof}

\begin{remark}
It is too difficult for us to obtain the eigenvectors of $Y$.
\end{remark}

\bigskip
\hrule
\bigskip


\begin{thebibliography}{99}
\bibitem{co}
E. Cohen, 
Rings of arithmetic functions,
\textit{Duke Math. J.}, 
\textbf{19} (1952), 115--129.

\bibitem{coj}
A. C. Cojocaru and M. Ram Murty,
{\it An Introduction to Sieve Methods and Their Applications},
Cambridge Univ. Press, 2005.

\bibitem{ha}
G. H. Hardy, 
Note on Ramanujan's trigonometrical function $c_q (n)$, and certain series of arithmetical functions,
\textit{Proc. Cambridge Phil. Soc.}, 
\textbf{20} (1921), 263--271.

\bibitem{iw}
H. Iwaniec and E. Kowalski,
{\it Analytic Number Theory},
AMS Colloquium Publications, Vol. 53, Providence, 2004.

\bibitem{kl}
H. D. Kloosterman, 
On the representation of numbers in the form $ax^2+by^2+cz^2+dt^2$, 
\textit{Acta Mathematica} 
\textbf{49} (1926), 407--464.

\bibitem{ku}
N. V. Kuznetsov, 
Petersson's conjecture for cusp forms of weight zero and Linnik's conjecture. Sums of Kloosterman sums, 
\textit{Math Sbornik} 
\textbf{111} (1980), 334--383.

\bibitem{Mc}
P. J. McCarthy,
{\it Introduction to Arithmetical Functions},
Springer-Verlag, 1986.

\bibitem{ramM}
M. Ram Murty,
Ramanujan series for arithmetical functions,
\textit{Hardy-Ramanujan Journal}, 
\textbf{36} (2013), 21--33.

\bibitem{rama}
S. Ramanujan,
On certain trigonometrical sums and their applications in the theory of numbers,
\textit{Transactions of the Cambridge Phil. Society}, 
\textbf{22} (1918), 179--199.

\bibitem{sc}
W. Schwarz and J. Spilker,
{\it Arithmetical Functions},
Cambridge Univ. Press, 1994.

\bibitem{vi}
I. M. Vinogradov,
{\it Elements of Number Theory},
Trans. from the 5th rev. ed. by S. Kravetz.
New York, Dover, 1954.

\bibitem{we}
A. Weil,
On some exponential sums,
\textit{Proc. Natl. Acad. Sci. USA}, 
\textbf{34} (1948), 204--207.

\end{thebibliography}
\end{document}